\newcommand\hide[1]{\commented{gray}{Hidden:}{#1}}
\renewcommand\hide[1]\empty
\newtheorem{thm}{Theorem}
\newtheorem{lm}{Lemma}
\newtheorem{cor}{Corollary}
\theoremstyle{definition}
\newtheorem{remark}{Remark}
\theoremstyle{remark}
\begin{document}

\renewcommand{\bm}{\boldsymbol}

\newcommand{\dom}{ {\mathop{\mathrm {dom\,}}\nolimits} }
\newcommand{\ran}{ {\mathop{\mathrm{ran\,}}\nolimits} }

\newcommand{\app}{ {\mathop{\mathrm {app}}\nolimits} }
\newcommand{\ext}{ {\mathop{\mathrm {ext}}\nolimits} }
\newcommand{\cur}{ {\mathop{\mathrm {cur}}\nolimits} }

\newcommand{\rel}{ {\mathop{\mathrm {rel}}\nolimits} }
\newcommand{\fnc}{ {\mathop{\mathrm {fnc}}\nolimits} }

\newcommand{\isom}{\cong}

\newcommand{\inter}{ {\mathop{\mathrm {int}}\nolimits} }
\newcommand{\cl}{ {\mathop{\mathrm {cl}}\nolimits} }
\newcommand{\lcl}{ {\mathop{\mathrm {lcl\,}}\nolimits} }
\newcommand{\rcl}{ {\mathop{\mathrm {rcl\,}}\nolimits} }
\newcommand{\cof}{ {\mathop{\mathrm {cof\,}}\nolimits} }
\newcommand{\add}{ {\mathop{\mathrm {add\,}}\nolimits} }
\newcommand{\sat}{ {\mathop{\mathrm {sat\,}}\nolimits} }
\newcommand{\tc}{ {\mathop{\mathrm {tc\,}}\nolimits} }
\newcommand{\unif}{ {\mathop{\mathrm {unif\,}}\nolimits} }
\newcommand{\uhr}{\!\upharpoonright\!}
\newcommand{\lra}{ {\leftrightarrow} }
\newcommand{\ot}{ {\mathop{\mathrm {ot\,}}\nolimits} }
\newcommand{\ol}{\overline}
\newcommand{\cnc}{ {^\frown} }
\newcommand{\image}{\/``\,}

\newcommand{\scc}{\bm\upbeta}

\newcommand{\wh}{\widehat}
\newcommand{\wt}{\widetilde}
\newcommand{\id}{\mathrm{id}}
\newcommand{\pr}{\mathrm{pr}}
\newcommand{\Cl}{\mathrm{Cl}}
\newcommand{\Clop}{\mathrm{Clop}}
\newcommand{\Lim}{\mathrm{Lim}}
\newcommand{\Lo}{\mathrm{Lo}}
\newcommand{\wf}{\mathrm{wf}}
\newcommand{\WF}{\mathrm{WF}}

\newcommand{\ZF}{\mathrm{ZF}}
\newcommand{\ZFC}{\mathrm{ZFC}}

\newcommand{\RK}{\mathrm{RK}}
\newcommand{\RB}{\mathrm{RB}}
\newcommand{\RF}{\mathrm{RF}}
\newcommand{\FB}{\mathrm{FB}}
\newcommand{\RFB}{\mathrm{RFB}}
\newcommand{\CF}{\mathrm{CF}}
\newcommand{\Comf}{\mathrm{C}}
\newcommand{\K}{\mathrm{K}}
\newcommand{\T}{\mathrm{T}}

\newcommand{\NCF}{\mathrm{NCF}}
\newcommand{\AC}{\mathrm{AC}}
\newcommand{\CH}{\mathrm{CH}}
\newcommand{\GCH}{\mathrm{GCH}}
\newcommand{\SCH}{\mathrm{SCH}}
\newcommand{\FRH}{\mathrm{FRH}}
\newcommand{\MA}{\mathrm{MA}}
\newcommand{\MM}{\mathrm{MM}}
\newcommand{\PFA}{\mathrm{PF}}

\author{Nikolai L.~Poliakov, Denis I.~Saveliev}
\title{On embedding of partially ordered sets 
in $(\scc\omega,\le_\RK)$}

\date{05.05.2025}
\thanks{{\it MSC~2020}:
Primary 
54D35, 
54D80, 
54F05, 
Secondary 
54C08, 
54C20, 
54C25, 
54G10, 
54E99} 

\thanks{{\it Keywords\/}: 
ultrafilter,
Rudin--Keisler order,
Comfort order,
ultrafilter extension,
order embedding,
$\ZFC$}
\thanks{The research is supported 
by the MSHE RF GZ project}

\begin{abstract}  
A~natural question, which appeared as Problem~61 
of~\cite{Hart vanMill 2024}, asks whether every finite 
partial order is embeddable in the Rudin--Keisler order 
on the set of ultrafilters over a~countable set. Although 
the positive answer, even for all countable partial 
orders, was proved under~$\CH$ in~\cite{Blass 1970}, 
the question in $\ZFC$ alone remained completely open. 
We show that, in~$\ZFC$, it is possible not only 
to answer in the positive for all countable orders, 
but, moreover, to construct embeddings of 
the ordered by inclusion lattices of finite 
subsets of a~set of cardinality~$2^{\mathfrak c}$, and 
of countable subsets of a~set of cardinality~$\aleph_1$, 
to the set of ultrafilters with any relation lying 
between the Rudin--Keisler and Comfort orders.  
\end{abstract}

\maketitle

\section{Introduction}\label{sec: intro}

\par
The question of the structure of the Rudin--Keisler 
preorder~$\le_{\RK}$  on $\scc\omega$, the set of 
ultrafilters over~$\omega$, and, in particular, of 
what orders can be isomorphically embedded in it, is 
nearly as old as its definition itself. Let us recall, 
without claiming to be exhaustive, several relevant 
facts. Many of them remain true, sometimes with minor 
modifications, in the case of ultrafilters over other 
cardinals; for simplicity, however, we state them here 
only in the case of~$\omega$. 

\par
The preorder~$\le_{\RK}$ was defined independently 
by M.\,E.~Rudin \cite{MERudin 1966,MERudin 1971}
and Keisler \cite{Keisler 1967}; before this, W.~Rudin 
defined the isomorphism of ultrafilters~$\isom$ 
in~\cite{WRudin 1956}. The paper \cite{MERudin 1971} 
contains, among others, the following basic observations 
on the structure of $\le_{\RK}$\,. The equivalence 
induced by $\le_{\RK}$ coincides with~$\isom$\,, 
so $\le_{\RK}$ induces an order on isomorphism 
classes (usually called types) of ultrafilters. 
The preorder $\le_{\RK}$ includes $\le_{\RF}$\,, 
the Rudin--Frol{\'\i}k preorder, appeared 
independently in the works by M.\,E.~Rudin 
\cite{MERudin 1966,MERudin 1971} and Frol{\'\i}k 
\cite{Frolik 1967a,Frolik 1967b,Frolik 1967c}.
For any nonprincipal $\mathfrak u\in\scc\omega$ 
there are $\mathfrak c$ its $\le_{\RK}$-predecessors 
and $2^\mathfrak c$ its $\le_{\RK}$-successors (where 
$\mathfrak c$ is $2^{\aleph_0}$, the cardinality 
of continuum). Moreover, $\le_{\RK}$ is directed, 
and in fact, $\mathfrak c^+$-directed, as follows 
from more general results by Comfort and 
Negrepontis \cite{Comfort Negrepontis 1972}
(see also \cite{Comfort Negrepontis 1974}, 
Theorem~10.9 and onward). It easily follows that 
$(\scc\omega,\le_{\RK})$ contains well-ordered chains 
of (the largest possible) length~$\mathfrak c^+$. 
In~\cite{Kunen 1972}, Kunen shows that there are 
$\le_{\RK}$-incomparable ultrafilters over~$\omega$, 
and in~\cite{Shelah Rudin 1978}, Shelah and 
M.\,E.~Rudin proved that there are $2^\mathfrak c$ 
such ultrafilters. Kunen defined weak $P$-points and 
$\kappa$-OK-points (a~concept weaker than $P$-point but 
stronger than weak $P$-points for $\kappa\ge\aleph_1$) 
and proves in~\cite{Kunen 1978} their existence 
(without any additional assumptions) in the space 
$\scc\omega\setminus\omega$ of nonprincipal ultrafilters 
over~$\omega$. Combining these results, Simon showed 
in~\cite{Simon 1985} that there exist $2^\mathfrak c$ 
pairwise $\le_{\RK}$-incomparable $\aleph_1$-OK 
ultrafilters (thus weak $P$-points), 
the fact we shall use below. 
In~\cite{vanMill 1984}, van~Mill showed that 
$\le_{\RK}$-above each ultrafilter there is 
a~$\mathfrak c$-OK ultrafilter (actually, 
Theorem~4.5.1 of \cite{vanMill 1984} is much 
stronger: it states this for $\le_{\RB}$\,, 
the Rudin--Blass preorder, and that this can be 
realized uniformly, by a~single finite-to-one map).
Product of ultrafilters was defined independently 
by Kochen in \cite{Kochen 1961} and Frayne, Morel, 
and Scott in \cite{Frayne Morel Scott 1962-3} and 
then studied by Frol{\'\i}k~\cite{Frolik 1967a},
Booth~\cite{Booth 1969,Booth 1970}, and others. 
Laflamme in \cite{Laflamme 1989} proved that this 
operation with any nonprincipal fixed argument is 
an isomorphic embedding of $(\scc\omega,<_{\RK})$ 
into itself 
(see also \cite{Garcia-Ferreira 1993}, Theorem~2.26); 
this shows that $<_{\RK}$-above each ultrafilter 
there is an ordered copy of the whole structure 
$(\scc\omega,<_{\RK})$. 
In~\cite{Booth 1969,Booth 1970}, Booth showed that 
the ultrapower of $(\omega,<)$ by any nonprincipal 
ultrafilter embeds in $(\scc\omega,<_{\RF})$; a~fortiori, 
it embeds in $(\scc\omega,<_{\RK})$. 
It follows by a~standard 
saturation argument therefore that both structures 
are universal for linearly ordered sets of
cardinality~$\le\aleph_1$\,.
The Comfort preorder~$\le_{\Comf}$ was 
defined by Comfort (based on the concept 
of $\mathfrak u$-compactness for 
ultrafilters~$\mathfrak u$, introduced 
by Bernstein in~\cite{Bernstein 1970}) and 
first appeared, probably, in Garc{\'\i}a-Ferreira's 
papers~\cite{Garcia-Ferreira 1993, Garcia-Ferreira 1994}. 
Little is known about the structure of~$\le_{\Comf}$ 
on~$\scc\omega$ except that it includes~$\le_{\RK}$\,, 
is $\mathfrak c^+$-directed, and there are 
$2^{\mathfrak c}$ pairwise $\le_{\Comf}$-incomparable 
ultrafilters; the latter is immediate from Simon's 
result mentioned above and the fact that $\le_{\Comf}$ 
coincides with $\le_{\RK}$ on the set of weak $P$-points
(\cite{Garcia-Ferreira 1993}, Theorem~2.10 and 
Corollary~2.11; this last property is unlikely to 
characterize weak $P$-points but this is unknown). 
Using this, we shall embed all finite orders (and more) 
in any relation between $\le_{\RK}$ and~$\le_{\Comf}$\,, 
including the relations~$R_\alpha$ defined by 
the authors in~\cite{Poliakov Saveliev 2025}.
The characterization of $\le_\RK$ via elementary 
embeddings of ultrapowers was given by Blass in his 
pioneering work~\cite{Blass 1970} and was generalized 
to all~$R_\alpha$ (including~$\le_{\Comf}$)
in~\cite{Poliakov Saveliev 2025}. 

\par 
The facts listed above are established in $\ZFC$ 
alone. More statements can be proved under 
additional assumptions, primarily using $\CH$ 
(the continuum hypothesis) or some weaker hypotheses.
In~\cite{WRudin 1956}, W.~Rudin proved that, 
under~$\CH$, the space $\scc\omega\setminus\omega$ 
contains $2^{\mathfrak c}$~$P$-points (and each of them 
can be mapped to each other by some autohomeomorphism 
of the space). In~\cite{MERudin 1971}, M.\,E.~Rudin 
proved, also under~$\CH$, that there is no 
$\le_{\RK}$-maximal $P$-points. 
Pitt in \cite{Pitt 1971} and Mathias in 
\cite{Mathias 1972} independently proved that, 
under~$\CH,$ there is an ultrafilter without 
$P$-points $\le_{\RK}$-below it. 
The existence of $P$-point, unlike weak 
$P$-points, is unprovable in $\ZFC$; this was shown 
by Shelah, see~\cite{Shelah forcing}. 
%
Choquet in~\cite{Choquet 1968a,Choquet 1968b} and 
Sirota in~\cite{Sirota 1969} independently showed 
(in different terms) that $\CH$ implies the existence 
of $\le_{\RK}$-minimal ultrafilters. Any such 
ultrafilter is a~$P$-point (but not conversely), 
so their existence is unprovable in $\ZFC$. In fact, 
$\CH$ implies that the set of $\le_{\RK}$-minimal  
ultrafilters has cardinality~$2^{\mathfrak c}$, and 
Shelah constructed models in which its cardinality 
is any prescribed $\kappa\le\mathfrak c$\,, 
see~\cite{Shelah forcing}. 
There is a~number of natural characterizations of 
$\le_{\RK}$-minimal ultrafilters as Ramsey, selective, 
quasinormal, etc., which were obtained by many authors 
including Blass, Booth, Kunen, Rowbottom, and others 
(see \cite{Comfort Negrepontis 1974}, Theorem~9.6 
and Notes for~\S\,9); also, by the mentioned result 
of~\cite{Garcia-Ferreira 1993}, $\le_{\RK}$-minimal 
ultrafilters are exactly $\le_{\Comf}$-minimal 
weak $P$-points. 
The following results were obtained by Blass, also  
in~\cite{Blass 1970}, under~$\CH$ (or sometimes 
a~weaker assumption). The partially ordered set 
$(\scc\omega,<_\RK)$ contains an isomorphic copy of 
the Boolean lattice $(\mathscr P(\omega),\subset)$ as 
well as of the ``long line'' (the lexicographic product 
of $\omega_1$ and the real line). Each ultrafilter 
has $2^{\mathfrak c}$~immediate $<_\RK$-successors. 
$(\scc\omega,<_\RK)$ is not an upper semilattice, 
moreover, for any~$n<\omega$, there are 
$\mathfrak u,\mathfrak v\in\scc\omega$ with 
exactly $n+2$~minimal upper bounds; furthermore, 
the set of $P$-points is not $<_\RK$-directed (see 
also \cite{Comfort Negrepontis 1974}, Theorem~16.8). 
Ultrafilters with a~prescribed finite number 
of $<_\RK$-predecessors were studied by Blass 
in \cite{Blass 1973} and Daguenet in 
\cite{Daguenet 1975,Daguenet 1979}; 
like  $\le_{\RK}$-minimal ultrafilters, 
they have natural Ramsey-like characteristics. 
%
In contrast to the situation of~$\CH$, under~$\NCF$ 
(the near coherence filter principle, introduced 
by Blass and Shelah in~\cite{Blass Shelah 1987}), 
$(\scc\omega\setminus\omega,<_\RK)$ 
is downward directed without a~least element 
(so there is no $\le_\RK$-minimal ultrafilters) 
and $P$-points form a~coinitial subset. 
The consistency of the existence of simple 
$P_{\aleph_1}$-points and simple $P_{\aleph_2}$-points 
simultaneously, claimed in~\cite{Blass Shelah 1987}, 
was correctly proved recently by Bräuninger and 
Mildenberger in~\cite{Brauninger Mildenberger 2023}. 
In~\cite{Blass 1981}, Blass showed that, 
under~$\CH$, there is an initial segment of 
$(\scc\omega\setminus\omega,<_\RK)$ order-isomorphic
to the tree $((2^{\mathfrak c})^{<\omega_1},\subset)$ 
and such that each increasing $\omega$-sequence of 
its points has a~unique supremum. 
The equality $2^{\mathfrak c}=\mathfrak c^+$ is 
equivalent to the existence of a~well-ordered cofinal 
set of $(\scc\omega,<_\RK)$, as well as of such a~set 
of cardinality~$2^{\mathfrak c}$, or just of a~chain 
of cardinality~$2^{\mathfrak c}$\,; 
this was stated by Comfort and Negrepontis 
in \cite{Comfort Negrepontis 1972} and 
Shelah (see \cite{Comfort Negrepontis 1974}, 
Theorem~10.11 and Corollary~10.15, or 
\cite{Comfort 1977}, Theorem~6.5).
Improving on Blass' result mentioned above, Raghavan 
and Shelah showed in~\cite{Raghavan Shelah 2017} that, 
under $\MA$ (Martin's axiom), $(\scc\omega,<_\RK)$, 
and even its subset consisting of $P$-points, contains 
an isomorphic copy of the Boolean lattice 
$
(\mathscr P(\omega),\subset)
/\mathscr P_\omega(\omega).
$

Returning to the situation in $\ZFC$ alone, it should 
be noted that little is known about the structure of 
the Rudin--Keisler order beyond the facts listed 
above. One natural question that has remained open 
so far is whether $\ZFC$ proves that every finite 
partially ordered set is isomorphically embeddable 
in $(\scc\omega,<_\RK)$; this question has appeared, 
although without explicit mention of provability 
in~$\ZFC$, in Hart and van Mill's list of open 
problems on $\scc\omega$ \cite{Hart vanMill 2024} 
as Problem~61. As is noted there, although 
to answer it affirmatively, it would suffice 
to embed any finite Boolean algebra, i.e., 
$(\mathscr P(n),\subseteq)$ for any finite~$n$, 
even for $n=3$ the question seems to remain open  
(for $n=2$ the solution easily follows 
from known facts, see~\cite{Zwaneveld 2021}). 
In this paper, we answer this question affirmatively. 
Actually, we prove in $\ZFC$ two stronger statements 
(Theorems \ref{t: embedding into RK etal} 
and~\ref{t: embedding of countable into RK etal}). 
We show that $(\scc\omega,\le_\RK)$ contains 
isomorphic copies of the lattice 
$(\mathscr P_\omega(2^{\mathfrak c}),\subseteq)$ 
as well as of the lattice 
$(\mathscr P_{\omega_1}(\omega_1),\subseteq)$;  
moreover, this remains true for $(\scc\omega,R)$ 
with any relation~$R$ lying between $\le_{\RK}$ 
and~$\le_{\Comf}$\,, in particular, with all relations 
$R_{1+\alpha}$ from~\cite{Poliakov Saveliev 2025}. 
We also prove some facts about weak $P$-points, which 
are of an auxiliary importance for the proof of our 
main results, but may be of an independent interest
(Theorems \ref{t: weak p-point} 
and~\ref{t: weak p-point below product}).
The results of this paper were announced 
in~\cite{Poliakov Saveliev 2026}.

\section{Definitions}\label{sec: def}

\par
In this section, we recall the precise definitions 
of the concepts mentioned in the introduction. Most 
of them are given only for the case $X=\omega$, which 
we consider in this paper, but they can easily be 
generalized to arbitrary sets~$X$.

\subsection*{Topology}

\par
For $X$ a~set, $\scc X$ denotes the set of 
ultrafilters over~$X$. As usual, we suppose  
$X\subseteq\scc X$ by identifying each $x\in X$
with the principal ultrafilter given by~$x$. 
A~natural topology on $\scc X$ is generated by 
basic open sets 
$\wt A:=\{\mathfrak u\in\scc X:A\in\mathfrak u\},$
for all $A\subseteq X$. 
The resulting space~$\scc X$ is compact, Hausdorff, 
zero-dimensional (since the sets $\wt A$ are in fact 
clopen), moreover, extremally disconnected (the 
closure of any its open set is open), and the largest 
compactification of the discrete space~$X$. 
In a~more general context, for any Tychonoff 
space~$X$, the symbol $\scc X$ denotes its 
{\it largest} (or {\it \v{C}ech--Stone}) 
{\it compactification}, i.e., a~space satisfying 
the following properties: $X$~is dense in~$\scc X$,  
and every continuous map~$h$ of~$X$ into any compact 
Hausdorff space~$Y$ (uniquely) extends to 
a~continuous map~$\widetilde{h}$ of $\scc X$ into~$Y$. 
Such a~space $\scc X$ is unique up to homeomorphism; 
the extension~$\widetilde{h}$ can be explicitly 
defined by letting, for all $\mathfrak x\in\scc X$,  
\begin{gather*}
\widetilde{h}(\mathfrak x):=
\bigcap\{\cl_Yh[A]:
A~\text{-- a~neighborhood ofthe point }\mathfrak x\}
\end{gather*}
(where $h[A]$ is the image of $A$ under~$h$, 
and $\cl_Y$~is the closure in~$Y$); in the case 
when $X$~is discrete and $\mathfrak u$ is 
an ultrafilter over~$X$, we get 
$
\widetilde{h}(\mathfrak u)=
\bigcap_{A\in\mathfrak u}\cl_Yh[A]
$
(see, e.g., 
\cite{Comfort Negrepontis 1974,Hindman Strauss}). 

\subsection*{Preorders}
The {\it Rudin--Keisler preorder}~$\le_{\RK}$ 
on $\scc\omega$ is defined by letting, 
for all $\mathfrak u,\mathfrak v\in\scc\omega$,
\begin{gather*}
\mathfrak u\le_{\RK}\mathfrak v 
\;\text{ iff }\;
\text{there exists $f:\omega\to\omega$ such that }
\widetilde{f}(\mathfrak v)=\mathfrak u. 
\end{gather*}
Requiring additionally that $f$ be finite-to-one, we 
obtain the {\it Rudin--Blass  preorder}~$\le_{\RB}$\,,
and that $f$~be, moreover, a~permutation, the 
{\it isomorphism} relation~$\isom$ of ultrafilters. 
The {\it Rudin--Frol{\'\i}k preorder}~$\le_{\RF}$ 
on $\scc\omega$ is defined by letting, 
for all $\mathfrak u,\mathfrak v\in\scc\omega$,
\begin{gather*}
\mathfrak u\le_{\RF}\mathfrak v 
\;\text{ iff }\;
\text{there exists a~one-to-one 
$f:\omega\to\scc\omega$ such that 
$\ran f$ is strongly discrete and }
\widetilde{f}(\mathfrak u)=\mathfrak v,
\end{gather*}
where a~subspace is {\it strongly discrete} iff 
it admits a~pairwise disjoint family of 
neighborhoods of its points. 
(Clearly, in the definition of $\le_{\RK}$ we could 
use a~continuous $f:\scc\omega\to\scc\omega$ 
such that $f(\mathfrak v)=\mathfrak u$, and 
in the definition of $\le_{\RF}$\,, a~topological 
embedding $f:\scc\omega\to\scc\omega$ such that 
$f(\mathfrak u)=\mathfrak v$.)
We have ${\le_\RB}\cup{\le_\RF}\subseteq{\le_\RK}$\,, 
and $\le_{\RF}$ is {\it tree-like}, i.e., for any 
$\mathfrak u$, if $\mathfrak v\le_{\RF}\mathfrak u$ 
and $\mathfrak w\le_{\RF}\mathfrak u$, then 
$\mathfrak v$ and $\mathfrak w$ are 
$\le_{\RF}$-comparable.
The {\em Comfort} preorder~$\le_\Comf$ 
on $\scc\omega$ is defined by letting, 
for all $\mathfrak u,\mathfrak v\in\scc\omega$, 
\begin{gather*}
\mathfrak u\le_\Comf\mathfrak v
\;\text{ iff }\; 
\text{any $\mathfrak v$-compact space 
is $\mathfrak u$-compact,} 
\end{gather*}
where a~space~$X$ is {\em $\mathfrak u$-compact} 
iff $\wt{f}(\mathfrak u)\in X$ for any 
$f:\omega\to X$ (providing $X$ is Tychonoff 
and so included into its largest 
compactification~$\scc X$).
Following~\cite{Poliakov Saveliev 2025}, 
for all ordinals~$\alpha$, define 
the relations~$R_\alpha$ as follows:
$R_0:=\omega\times\scc\omega$,
and for all $\alpha>0$ and 
$\mathfrak u,\mathfrak v\in\scc\omega$, 
\begin{align*}
\mathfrak u\,R_\alpha\,\mathfrak v
\;\text{ iff }\; 
\text{there exists }
f:\omega\to\scc\omega 
\text{ such that }
\widetilde{f}(\mathfrak v)=\mathfrak u 
\text{ and }
f(n)\,R_{<\alpha}\,\mathfrak v, 
\text{ for all }n<\omega,
\end{align*}
where 
$R_{<\alpha}:=\bigcup_{\beta<\alpha}R_\beta$\,. 
The hierarchy of $R_\alpha$ is non-degenerate 
below $\omega_1$ and lies (for $\alpha>0$) 
between $\le_\RK$ and~$\le_\Comf$\,: 
\begin{gather*}
R_0\subset 
{\le_{\RK}}=R_1\subset 
R_2\subset 
\ldots\subset 
R_{<\omega}\subset 
R_\omega\subset 
\ldots\subset
R_{<\alpha}\subset 
R_\alpha\subset 
\ldots\subset 
R_{<\omega_1}=R_{\omega_1}={\le_\Comf}\,.
\end{gather*}
Moreover, if $2\le\alpha\le\omega_1$\,, then 
$R_{<\alpha}$ is a~preorder iff $\alpha$~is 
multiplicatively indecomposable. This allows us 
to define a~hierarchy of preorders~$\le_{\alpha}$
lying between $\le_\RK$ and $\le_\Comf$ 
by letting, for all $\alpha\le\omega_1$\,, 
\begin{gather*}
{\le_0}:={\le_\RK}
\;\text{ and }\; 
{\le_{1+\alpha}}:=R_{<\omega^{\omega^\alpha}}\,. 
\end{gather*}
We speak of the {\it Rudin--Keisler order} both in 
the case of the strict order $<_{\RK}$ on $\scc\omega$ 
and in the case of the (strict or non-strict) order 
on $\scc\omega/{\isom}$ induced by the preorder 
$\le_{\RK}$ on $\scc\omega$\,, and use the same 
symbols in both cases. This convention is adopted 
to other preorders on ultrafilters as well; e.g., 
the {\it Comfort order} may relate to $<_\Comf$ on 
$\scc\omega$ as well as on $\scc\omega/{=_\Comf}$\,.
The question of the existence of an isomorphic embedding 
of a~given order into a~given preorder on ultrafilters, 
which is the topic of this article, can be understood 
equivalently as the embedding between the corresponding 
strict orders and as the embedding into its quotient.


\subsection*{Ultrafilter extensions}

\par
If $\mathfrak u$ is an ultrafilter, define
the (self-dual) {\it ultrafilter quantifier} 
by letting $\forall^{\mathfrak u}x\:\varphi(x)$ 
iff $\{x:\varphi(x)\}\in\mathfrak u$. 
The ({\it Fubini} or {\it tensor}) {\it product of 
ultrafilters} $\mathfrak u,\mathfrak v\in\scc\omega$ 
is an ultrafilter
$
\mathfrak u\otimes\mathfrak v
\in\scc(\omega\times\omega)
$ 
consisting of $S\subseteq\omega\times\omega$ 
such that 
$
\forall^{\mathfrak u}x\,\forall^{\mathfrak v}y\:
(x,y)\in S.
$
More generally, for any $n$-ary map $f:\omega^n\to Y$, 
where $Y$ is a~discrete space,
its {\it ultrafilter extension} is the $n$-ary map 
$\widetilde{f}:(\scc\omega)^n\to\scc Y$ defined by 
letting, for all 
$\mathfrak u_0\,,\ldots,\mathfrak u_{n-1}\in\scc\omega$,
$$
\widetilde{f}(\mathfrak u_0\,,\ldots,\mathfrak u_{n-1}):=
\{S\subseteq Y:
\forall^{\mathfrak u_0}x_0
\ldots
\forall^{\mathfrak u_{n-1}}x_{n-1}\:
f(x_0\,,\ldots,x_{n-1})\in S
\}.
$$
(Thus the multiplication of ultrafilters is 
the ultrafilter extension of the pairing map.)
The ultrafilter extension~$\widetilde{f}$ of $f$ 
is its unique  extension to~$\scc\omega$ that is 
{\it right-continuous with respect to}~$\omega$, 
i.e., such that, for each $i<n$, the shift 
$
\mathfrak v\mapsto
\widetilde{f}(k_0\,,\ldots,k_{i-1},\mathfrak v,
\mathfrak u_{i+1}\,,\ldots,\mathfrak u_{n-1})
$
is continuous whenever 
$k_0,\ldots,k_{i-1}\in\omega$ and 
$
\mathfrak u_{i+1},\ldots,\mathfrak u_{n-1}
\in\scc\omega.
$ 
Let us point out that ultrafilter extensions of 
$n$-ary relations, together with their similar 
topological characterization, are also known; 
moreover, the resulting ultrafilter extensions 
of models are their largest compactifications, 
in a~certain sense. However, since these concepts 
will not be used in this article, we omit here 
their definitions and refer the reader to 
\cite{Saveliev 2011, Saveliev 2012} 
(see also~\cite{Poliakov Saveliev 2021}, Introduction).
Concerning an interplay between ultrafilter extensions 
of maps and the defined relations on ultrafilters, 
we always have: 
$\mathfrak u<_{\RK}\mathfrak u\otimes\mathfrak v$ 
and 
$\mathfrak v<_{\RK}\mathfrak u\otimes\mathfrak v$ 
(via the continuous extensions of the projection maps);   
also 
$\mathfrak u<_{\RF}\mathfrak u\otimes\mathfrak v$ 
and 
$\mathfrak v<_{\RB}\mathfrak u\otimes\mathfrak v\,$, 
but generally neither 
$\mathfrak v\le_{\RF}\mathfrak u\otimes\mathfrak v$ 
nor 
$\mathfrak u\le_{\RB}\mathfrak u\otimes\mathfrak v$ 
(see Remark~\ref{r: non-RB} below). 
Also $\mathfrak u\,R_n\,\mathfrak v$ iff 
there exists $f:\omega^n\to\omega$ such that 
$
\wt{f}(\mathfrak v,\ldots,\mathfrak v)
=\mathfrak u\,,
$
and a~similar characterization can be provided 
for $R_\alpha$ with $\omega\le\alpha<\omega_1$ 
by using certain maps $f:\omega^\omega\to\omega$ 
(see~\cite{Poliakov Saveliev 2025}).

\subsection*{$P$-point variants}

\par 
The following concepts are meaningful for 
any topological space, but below we shall 
always assume that they refer to the space 
$\scc\omega\setminus\omega$ 
of non-principal ultra\-filters (which is 
closed in~$\scc\omega$). An ultrafilter 
$\mathfrak u\in\scc\omega\setminus\omega$ 
is a~{\it $P$-point} iff the intersection of any 
countable set of neighborhoods of~$\mathfrak u$ 
contains a~neighborhood of~$\mathfrak u$, and 
a~{\it weak $P$-point} iff $\mathfrak u$ does 
not belong to the closure of any countable set 
$
A\subseteq\scc\omega\setminus
(\omega\cup\{\mathfrak u\}).
$
More generally, given a~cardinal~$\kappa$, an 
ultrafilter $\mathfrak u\in\scc\omega\setminus\omega$ 
is a~{\it $P_\kappa$-point} iff the intersection 
of~$<\kappa$ neighborhoods of~$\mathfrak u$ 
contains a~neighborhood of~$\mathfrak u$, and 
a~{\it weak $P_\kappa$-point} 
iff it does not belongs to the closure of any set 
$
A\subseteq\scc\omega\setminus
(\omega\cup\{\mathfrak u\})
$
of cardinality $|A|<\kappa$. 
(Thus $P$-points and weak $P$-points 
are obtained when $\kappa=\aleph_1$\,.)
A~$P_\kappa$-point is {\it simple} iff 
it has a~basis of cardinality~$\kappa$. 
An ultrafilter~$\mathfrak u$ is $\kappa$-{\it OK} 
iff for any $\omega$-sequence $(A_n)_{n<\omega}$ 
of neighborhoods of~$\mathfrak u$ there exists 
its $\kappa$-{\it refinement}, i.e., 
a~$\kappa$-sequence $(B_\alpha)_{\alpha<\kappa}$ 
of neighborhoods of~$\mathfrak u$ such that, 
for any $n<\omega$ and $n$-subsequence 
$(B_{\alpha_i})_{i<n}$\,, we have 
$\bigcap_{i<n}B_{\alpha_i}\subseteq A_n$\,. Obviously, 
each of the three properties depending on~$\kappa$ 
(i.e., of being $P_\kappa$\,, weak $P_\kappa$\,, and 
$\kappa$-OK) become stronger as $\kappa$ increases; 
any $\kappa$-OK point is a~weak $P_\kappa$-point; 
and any $P$-point is $\kappa$-OK for all~$\kappa$. 
Actually, we shall only need weak $P$-points for 
our proofs.

\section{Characterizations of weak $P$-points}%
\label{sec: weak p-points}

\par 
In this section, we provide some characterizations of 
weak $P$-points, which will be used in the subsequent 
proof of our main results, and are also of some 
independent interest. 

\par 
Clearly, a~unary operation on $\scc\omega$ is 
continuous iff it is a~continuous extension of 
a~map on~$\omega$ into~$\scc\omega$. Let us provide 
an alternative criterion in terms of binary operations 
on~$\omega$, which is easy but, to our knowledge, not 
previously published in the literature.

\begin{lm}\label{l: cont}
A~map $f:\scc\omega\to\scc\omega$ is 
continuous iff there exist $g:\omega^2\to\omega$ 
and $\mathfrak v\in\scc\omega$ such that 
$$
f(\mathfrak u)=\widetilde g(\mathfrak u,\mathfrak v)
$$
for all $\mathfrak u\in\scc\omega$. 
Moreover, as such a~$\mathfrak v$ one can take 
any $\le_{\RK}$-upper bound of $f[\omega]$, 
the image of $\omega$ under~$f$.
\end{lm}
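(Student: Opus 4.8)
The plan is to treat the two implications separately, the backward (``if'') direction being immediate and the forward (``only if'') direction carrying all the content. For the ``if'' direction, suppose $f(\mathfrak u)=\widetilde g(\mathfrak u,\mathfrak v)$ for all $\mathfrak u$ with $g$ and $\mathfrak v$ given. Then $f$ is literally the shift $\mathfrak u\mapsto\widetilde g(\mathfrak u,\mathfrak v)$, which is continuous by the right-continuity of the extension~$\widetilde g$ recalled in Section~2 --- precisely the instance $i=0$, with the second argument frozen to the fixed ultrafilter~$\mathfrak v$. So here nothing is needed beyond the quoted property of ultrafilter extensions.

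For the ``only if'' direction, assume $f$ is continuous. First I would produce the ultrafilter~$\mathfrak v$: since $(\scc\omega,\le_{\RK})$ is $\mathfrak c^+$-directed (Comfort--Negrepontis, as recalled in the introduction), the countable set $f[\omega]$ admits a~common $\le_{\RK}$-upper bound, and I fix any such bound~$\mathfrak v$, which at the same time establishes the ``moreover'' clause. For each $n<\omega$ we have $f(n)\le_{\RK}\mathfrak v$, so by definition of $\le_{\RK}$ there is $h_n:\omega\to\omega$ with $\widetilde{h_n}(\mathfrak v)=f(n)$; I then assemble these into a~single $g:\omega^2\to\omega$ by setting $g(n,m):=h_n(m)$.

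It remains to see that the two continuous maps $\mathfrak u\mapsto\widetilde g(\mathfrak u,\mathfrak v)$ and $f$ coincide. Evaluating at a~principal ultrafilter~$n$, the quantifier $\forall^n x_0$ collapses, so $\widetilde g(n,\mathfrak v)=\widetilde{h_n}(\mathfrak v)=f(n)$; thus the two maps agree on~$\omega$. As $\omega$ is dense in~$\scc\omega$ and $\scc\omega$ is Hausdorff, two continuous maps into $\scc\omega$ that agree on~$\omega$ agree everywhere, yielding $f(\mathfrak u)=\widetilde g(\mathfrak u,\mathfrak v)$ for all $\mathfrak u$. The only non-routine ingredient is the very first move of the forward direction: securing a~\emph{single} $\le_{\RK}$-upper bound for the infinitely many ultrafilters~$f(n)$. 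For a~finite family this would follow from iterated products, but for a~countable family it genuinely needs the $\sigma$-directedness (here $\mathfrak c^+$-directedness) of the Rudin--Keisler order; the rest is a~standard density argument.
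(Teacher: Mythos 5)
Your proposal is correct and follows essentially the same route as the paper's own proof: both directions are handled identically (right-continuity of $\widetilde g$ for the easy implication; a common $\le_{\RK}$-upper bound $\mathfrak v$ of $f[\omega]$, the assembled map $g(n,m):=h_n(m)$, and agreement of the two continuous maps on the dense set $\omega$ for the converse). The paper phrases the last step via uniqueness of the continuous extension $\widetilde h$ of $h(i):=\widetilde g(i,\mathfrak v)$, which is the same density-plus-Hausdorff argument you give.
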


\begin{proof}
Since the extension 
$\widetilde{g}:(\scc\omega)^2\to\scc\omega$ 
is right-continuous, 
the map~$f$ obtained from $\wt{g}$ by letting 
$
f(\mathfrak u):=
\widetilde g(\mathfrak u,\mathfrak v),
$ 
is automatically continuous. It remains to prove 
the converse.

\par 
Let $f:\scc\omega\to\scc\omega$ be continuos. 
Using that any countable (and in fact, of 
cardinality~$\mathfrak c$) set of ultrafilters 
there exists an $\le_{\RK}$-upper bound, 
pick $\mathfrak v\in\scc\omega$ such that 
$f(i)\le_{\RK}\mathfrak v$, for all $i\in\omega$. 
For each $i\in\omega$ we fix some 
$e_i:\omega\to\omega$ such that 
$\widetilde{e_i}(\mathfrak v)=\mathfrak u_i$\,. 
Consider the map $g:\omega^2\to\omega$ defined by 
letting $g(i,j):=e_i(j)$, and furthermore, its 
extension $\widetilde g:(\scc\omega)^2\to\scc\omega$. 
For every fixed $i\in\omega$ we have 
$$
g_{(i)}(j)=g(i,j)=e_i(j)
$$
(where the map $g_{(i)}$ is obtained from $g$ 
by fixing $i$ as the first argument), hence,
$$
\widetilde g(i,\mathfrak v)=
\widetilde{g_{(i)}}(\mathfrak v)=
\widetilde{e_i}(\mathfrak v)=
\mathfrak u_i=f(i).
$$
Now consider $h:\omega\to\scc\omega$ defined 
by letting $h(i):=\widetilde{g}(i,\mathfrak v)$. 
The map~$h$ coincides with the restriction of $f$ 
to~$\omega$. Therefore,  
$$
\widetilde{g}(\mathfrak u,\mathfrak v)
=\widetilde{h}(\mathfrak u)
=f(\mathfrak u)
$$
for all $\mathfrak u\in\scc\omega$, as required.   
\end{proof}

\par 
The following fact is well known:

\begin{lm}\label{l: closure}
For every $\mathfrak{u}\in\scc\omega$ and 
$A\subseteq\scc\omega$, the following are 
equivalent: 
\begin{enumerate}
\item[(i)]
$\mathfrak u\subseteq\bigcup A\,;$
\item[(ii)] 
$\bigcap A\subseteq\mathfrak u\,;$
\item[(iii)]
$\mathfrak u\in\cl_{\scc\omega}A\,.$
\end{enumerate}  
\end{lm}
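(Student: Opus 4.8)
The plan is to split the three-way equivalence into a purely topological equivalence, (i)$\Leftrightarrow$(iii), which merely unwinds the definition of the clopen basis, and a purely combinatorial equivalence, (i)$\Leftrightarrow$(ii), which rests on the maximality of ultrafilters (for every $S\subseteq\omega$ exactly one of $S$, $\omega\setminus S$ belongs to a given ultrafilter). Together these two give all the implications.

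First I would settle (i)$\Leftrightarrow$(iii). Recall that the basic neighborhoods of $\mathfrak u$ in $\scc\omega$ are exactly the clopen sets $\wt S=\{\mathfrak w\in\scc\omega:S\in\mathfrak w\}$ with $S\in\mathfrak u$. Hence $\mathfrak u\in\cl_{\scc\omega}A$ iff every such $\wt S$ meets $A$, i.e.\ iff for each $S\in\mathfrak u$ there is some $\mathfrak u'\in A$ with $S\in\mathfrak u'$. This last statement says precisely that every member of $\mathfrak u$ lies in $\bigcup A$, that is, $\mathfrak u\subseteq\bigcup A$, which is (i).

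Next comes (i)$\Leftrightarrow$(ii), where I would use the complement trick in both directions. For (i)$\Rightarrow$(ii), take $S\in\bigcap A$ and suppose toward a contradiction that $S\notin\mathfrak u$; then $\omega\setminus S\in\mathfrak u\subseteq\bigcup A$ by (i), so $\omega\setminus S\in\mathfrak u'$ for some $\mathfrak u'\in A$, while also $S\in\mathfrak u'$ since $S\in\bigcap A$ — impossible for the filter $\mathfrak u'$. For (ii)$\Rightarrow$(i), take $S\in\mathfrak u$ and suppose $S\notin\bigcup A$; then $S\notin\mathfrak u'$, hence $\omega\setminus S\in\mathfrak u'$, for every $\mathfrak u'\in A$, so $\omega\setminus S\in\bigcap A\subseteq\mathfrak u$ by (ii), contradicting $S\in\mathfrak u$.

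There is no serious obstacle here; the only points requiring care are invoking the ultrafilter (not merely filter) property at exactly the two places above, and, in the topological step, using that the $\wt S$ form a basis rather than a subbasis, so that closure can be tested against single basic neighborhoods. One should also note the degenerate case $A=\varnothing$, where $\bigcup A=\varnothing$, $\bigcap A=\mathscr P(\omega)$, and $\cl_{\scc\omega}\varnothing=\varnothing$, so that all three conditions fail simultaneously, in agreement with the equivalence.
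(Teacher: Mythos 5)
Your proof is correct and is essentially the paper's own argument: the combinatorial step (i)$\Leftrightarrow$(ii) via the complement trick is identical, and your direct verification of (i)$\Leftrightarrow$(iii) through the basic clopen neighborhoods $\wt S$, $S\in\mathfrak u$, just makes explicit the topological fact that the paper compresses into the one-line observation that $\cl_{\scc\omega}A=\{\mathfrak u:\bigcap A\subseteq\mathfrak u\}$. The remarks on where maximality of ultrafilters is used and on the degenerate case $A=\varnothing$ are sound but not needed beyond what you already wrote.
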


\begin{proof}
This is a~routine check.  

\par 
(i)$\Rightarrow$(ii). 
Let $\mathfrak u\subseteq\bigcup A.$ If 
$S\in\bigcap A$ then $\omega\setminus S$ 
is not in any $\mathfrak v\in A$, hence, 
$\omega\setminus S\notin\mathfrak u$\,, and so, 
$S\in\mathfrak u$\,.

\par 
(ii)$\Rightarrow$(i).
Let $\bigcap A\subseteq\mathfrak u\,.$ Let 
$S\in\mathfrak u$. Assume $S$ is not in 
any $\mathfrak v\in A$. Then 
$\omega\setminus S\in\bigcap A.$ Therefore,  
$\omega\setminus S\in\mathfrak u$\,, 
a~contradiction. 

\par 
It follows from (i)$\Leftrightarrow$(ii) that 
$
\cl_{\scc\omega}A=
\bigl\{\mathfrak u\in\scc\omega:
\bigcap A\subseteq\mathfrak u\bigr\},
$ 
whence we get (ii)$\Leftrightarrow$(iii).
\end{proof}

\begin{lm}\label{l: closure via cont}
For every $\mathfrak{u}\in\scc\omega$ 
and countable $A\subseteq\scc\omega$, 
the following are equivalent: 
\begin{enumerate}
\item[(i)] 
$\mathfrak u\in\cl_{\scc\omega}A\,;$ 
\item[(iia)] 
$\mathfrak u=\widetilde{f}(\mathfrak v)$
for any $f:\omega\to\scc\omega$ 
such that $\ran{f}=A$ and 
some $\mathfrak v\in\scc\omega\,;$ 
\item[(iib)] 
$\mathfrak u=\widetilde{f}(\mathfrak v)$
for some $f:\omega\to\scc\omega$ 
such that $\ran{f}\subseteq A$ and 
some $\mathfrak v\in\scc\omega\,;$ 
\item[(iii)]
$\mathfrak u=\widetilde{g}(\mathfrak v,\mathfrak w)$ 
for some $g:\omega^2\to\omega$ and some 
$\mathfrak v,\mathfrak w\in\scc\omega$ such that 
$
\{\widetilde{g}(i,\mathfrak w):
i\in\omega\}\subseteq A\,.
$  
\end{enumerate}
\end{lm}

\begin{proof}
(i)$\Rightarrow$(iia).
First let us note the following fact. 
Let $A\subseteq\scc\omega$ have an 
arbitrary cardinality $\kappa\le2^{\mathfrak c}$, 
and let $\mathfrak u_\alpha\,,\,{\alpha<\kappa}$, 
be an enumeration of $A$ (possibly with repetitions). 
If $\mathfrak u\in\cl_{\scc\omega}A$, let 
$\mathscr V:=\{A_S:S\in\mathfrak u\}$ where  
$A_S:=\{\alpha<\kappa:S\in\mathfrak u_\alpha\}$. 
As easy to see, for all $S,T\in\mathfrak u$ 
we have: 
\begin{enumerate}
\item[(a)] 
$A_S\ne\emptyset$ (due to 
$\mathfrak u\subseteq\bigcup A$ 
by Lemma~\ref{l: closure}) and 
\item[(b)]
$A_{S}\cap A_{T}=A_{S\cap T}$ 
(since $S\cap T\in\mathfrak u$).
\end{enumerate} 
Thus $\mathscr V$~does not have the empty set 
and is centered. 

\par 
Now let $\kappa:=\omega$. 
If $f:\omega\to\scc\omega$ is such that 
$\mathrm{ran}\,f=A$, letting 
$\mathfrak u_i:=f(i)$, $i<\omega$, 
we get an enumeration of~$A$. Construct 
$\mathscr V$ as above and pick any ultrafilter 
$\mathfrak v$ that extends~$\mathscr V$. Then 
$\mathfrak u=\widetilde{f}(\mathfrak v)$. 
Indeed, let $S\in\mathfrak u$\,. Then 
$A_S\in\mathfrak v$ and for every $i\in A_S$ 
we have $S\in f(i)$, whence it follows 
$\widetilde{f}(\mathfrak v)=\mathfrak u$\,.  

\par 
(iia)$\Rightarrow$(iib). Clear.

\par 
(iib)$\Rightarrow$(i).
If for some $\mathfrak v\in\scc\omega$ and 
$f:\omega\to\scc\omega$ we have 
$\widetilde{f}(\mathfrak v)=\mathfrak u$\,, 
then $\mathfrak u\in\cl_{\scc\omega}\,\ran{f}$ 
(since $\widetilde f(\mathfrak v)$ is the 
intersection of the sets $\cl_{\scc\omega}\,f[S]$ 
for all $S\in\mathfrak v$). 

\par 
(iia)$\Leftrightarrow$(iii). 
Use Lemma~\ref{l: cont}.
\end{proof}

\par 
Clearly, we can reformulate~(iia) by changing 
arbitrary $f:\omega\to\scc\omega$ with $\ran f=A$
to arbitrary continuous $f:\scc\omega\to\scc\omega$ 
with $f[\omega]=A$; and similarly for~(iib). 
A~variant of (iii) of the form ``\,for any 
$g:\omega^2\to\omega$\,'' is also possible 
but rather cumbersome. 

\par 
The above leads to the following characterizations 
of weak $P$-points: 

\begin{thm}\label{t: weak p-point}
For every $\mathfrak u\in\scc\omega\setminus\omega$ 
the following are equivalent:   
\begin{enumerate}
\item[(i)] 
$\mathfrak u$~is a~weak $P$-point$\,;$
\item[(iia)] 
for all $f:\omega\to\scc\omega$ and 
$\mathfrak v\in\scc\omega$ such that 
$\mathfrak u=\wt{f}(\mathfrak v)$ 
there exists $i\in\omega$ satisfying
$f(i)=\mathfrak u$ or $f(i)\in\omega\,;$
\item[(iib)] 
for all $f:\omega\to\scc\omega$ and 
$\mathfrak v\in\scc\omega$ such that 
$\mathfrak u=\wt{f}(\mathfrak v)$ 
one of two sets
$\{i\in\omega:f(i)=\mathfrak u\}$ 
and $\{i\in\omega:f(i)\in\omega\}$ 
belongs to~$\mathfrak v\,;$
\item[(iiia)]
for all $g:\omega^2\to\omega$ and 
$\mathfrak v,\mathfrak w\in\scc\omega$ such that 
$\mathfrak u=\widetilde g(\mathfrak v,\mathfrak w)$ 
there exists $i\in\omega$ satisfying 
$\widetilde{g}(i,\mathfrak w)=\mathfrak u$ or 
$\widetilde{g}(i,\mathfrak w)\in\omega\,;$
\item[(iiib)] 
for all $g:\omega^2\to\omega$ and  
$\mathfrak v,\mathfrak w\in\scc\omega$ such that 
$\mathfrak u=\widetilde g(\mathfrak v,\mathfrak w)$ 
one of two sets  
$\{i\in\omega:g(i,\mathfrak w)=\mathfrak u\}$ 
and $\{i\in\omega:g(i,\mathfrak w)\in\omega\}$ 
belongs to~$\mathfrak v\,.$
\end{enumerate}
\end{thm}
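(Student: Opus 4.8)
The plan is to funnel all four operational conditions through the closure characterizations of Lemmas~\ref{l: cont} and~\ref{l: closure via cont}, so that being a weak $P$-point becomes a statement about the ways in which $\mathfrak u$ can be written as $\wt f(\mathfrak v)$.

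First I would observe that the binary conditions (iiia), (iiib) are mere reparametrizations of the unary ones (iia), (iib), so that it suffices to handle (i), (iia), (iib). In one direction, given $g:\omega^2\to\omega$ and $\mathfrak w$, put $f(i):=\wt g(i,\mathfrak w)$; unfolding the iterated ultrafilter quantifier in the definition of the extension gives $\wt f(\mathfrak v)=\wt g(\mathfrak v,\mathfrak w)$ for every $\mathfrak v$, and under this substitution the sets $\{i:f(i)=\mathfrak u\}$ and $\{i:f(i)\in\omega\}$ of (iib) coincide with the two sets of (iiib). In the other direction, an arbitrary $f:\omega\to\scc\omega$ extends to a continuous map $\wt f:\scc\omega\to\scc\omega$, to which Lemma~\ref{l: cont} applies, producing $g:\omega^2\to\omega$ and $\mathfrak w$ with $\wt f(\mathfrak u')=\wt g(\mathfrak u',\mathfrak w)$ for all $\mathfrak u'$; evaluating at a principal $\mathfrak u'=i$ recovers $f(i)=\wt g(i,\mathfrak w)$. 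Hence (iia)$\Leftrightarrow$(iiia) and (iib)$\Leftrightarrow$(iiib).

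Next I would close the cycle (i)$\Rightarrow$(iib)$\Rightarrow$(iia)$\Rightarrow$(i). The middle implication is trivial, as every member of an ultrafilter is nonempty. For (iia)$\Rightarrow$(i) I argue contrapositively: if $\mathfrak u$ fails to be a weak $P$-point, I fix a countable $A\subseteq\scc\omega\setminus(\omega\cup\{\mathfrak u\})$ with $\mathfrak u\in\cl_{\scc\omega}A$, and Lemma~\ref{l: closure via cont} supplies $f:\omega\to\scc\omega$ with $f[\omega]\subseteq A$ and $\mathfrak u=\wt f(\mathfrak v)$ for some $\mathfrak v$; since every value $f(i)$ lies in $A$, we have $f(i)\ne\mathfrak u$ and $f(i)\notin\omega$ for all $i$, so (iia) fails.

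The crux is (i)$\Rightarrow$(iib), again by contraposition. Assume $\mathfrak u=\wt f(\mathfrak v)$ while neither $B_1:=\{i:f(i)=\mathfrak u\}$ nor $B_2:=\{i:f(i)\in\omega\}$ belongs to $\mathfrak v$. Then $C:=\omega\setminus(B_1\cup B_2)\in\mathfrak v$, and $f(i)\in\scc\omega\setminus(\omega\cup\{\mathfrak u\})$ for every $i\in C$. Using that $\wt f(\mathfrak v)=\bigcap_{S\in\mathfrak v}\cl_{\scc\omega}f[S]$ and $C\in\mathfrak v$, I obtain $\mathfrak u\in\cl_{\scc\omega}f[C]$, so $A:=f[C]$ is a countable subset of $\scc\omega\setminus(\omega\cup\{\mathfrak u\})$ whose closure contains $\mathfrak u$, and $\mathfrak u$ is not a weak $P$-point. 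I expect this to be the only delicate point: the strength of (iib) over (iia) is that the ``good'' indices form a $\mathfrak v$-large, not merely nonempty, set, and the argument shows that a failure of largeness lets one shrink the witnessing family to one lying entirely in $\scc\omega\setminus(\omega\cup\{\mathfrak u\})$, exploiting that $\wt f(\mathfrak v)$ is insensitive to altering $f$ off a member of $\mathfrak v$. The remaining links are then routine bookkeeping on the two cited lemmas.
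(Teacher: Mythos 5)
Your proof is correct and follows essentially the same route as the paper: both rely on Lemma~\ref{l: cont} and Lemma~\ref{l: closure via cont} to translate the weak $P$-point property into the operational conditions, and both exploit the fact that $\wt f(\mathfrak v)$ depends only on $f$ restricted to a member of $\mathfrak v$. The only cosmetic difference is that the paper proves the ``b'' versions by modifying $f$ off the $\mathfrak v$-large set $\{i:f(i)\notin\omega\cup\{\mathfrak u\}\}$ and re-invoking the ``a'' version, whereas you restrict the witnessing family to $f[C]$ and appeal directly to the closure definition; the two arguments are interchangeable.
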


\begin{proof}
(i)$\Leftrightarrow$(iia)$\Leftrightarrow$(iiia).
This follows immediately from 
Lemma~\ref{l: closure via cont}.

\par 
(iia)$\Leftrightarrow$(iib).  
The implication from the right to the left is 
evident, so let us prove the converse. 
Assume that 
neither the set $\{i\in\omega:f(i)=\mathfrak u\}$ 
nor the set $\{i\in\omega:f(i)\in \omega\}$ 
belong to~$\mathfrak v$\,. Then $\mathfrak v$ 
has the set 
$$
A:=
\{i\in\omega:f(i)\notin\omega\cup\{\mathfrak u\}\}.
$$ 
Pick an arbitrary ultrafilter 
$
\mathfrak w\in
\scc\omega\setminus(\omega\cup\{\mathfrak u\})
$ 
and consider the map $e:\omega\to\scc\omega$ 
defined by letting 
$$
e(i):=
\begin{cases}
f(i)&\text{if }i\in A, 
\\
\mathfrak w&\text{otherwise}. 
\end{cases}
$$
Then 
$
\wt{e}(\mathfrak v)=\wt{f}(\mathfrak v)
=\mathfrak u\,,
$ 
but $e(i)\notin\omega\cup\{\mathfrak u\}$ 
for all $i\in\omega$. 

\par 
(iiia)$\Leftrightarrow$(iiib).  
Likewise. 
\end{proof}

\begin{remark}
Moreover, each of (i)--(iiib)
in Theorem~\ref{t: weak p-point} 
is equivalent to any of the two following: 
\begin{enumerate}
\item[(iva)] 
for all $g:\omega^2\to\omega$ and 
$\mathfrak v\in\scc\omega$ such that 
$\mathfrak u=\widetilde g(\mathfrak v,\mathfrak v)$ 
there exists $i\in\omega$ satisfying 
$\widetilde g(i,\mathfrak v)=\mathfrak u$ or 
$\widetilde g(i,\mathfrak v)\in\omega\,;$
\item[(ivb)] 
for all $g:\omega^2\to\omega$ and  
$\mathfrak v\in\scc\omega$ such that 
$\mathfrak u=\widetilde g(\mathfrak v,\mathfrak v)$ 
one of two sets  
$\{i\in\omega:g(i,\mathfrak v)=\mathfrak u\}$ 
and $\{i\in\omega:g(i,\mathfrak v)\in\omega\}$ 
belongs to~$\mathfrak v\,.$
\end{enumerate}
We prove this fact elsewhere; here it will not be used. 
\end{remark}

\begin{remark}\label{r: larger kappa} 
The results of this section can be generalized
using cardinal parameters (but still considering 
ultrafilters in~$\scc\omega$).

\par 
Lemma~\ref{l: cont} is generalized as follows: 
given a~$\kappa\le\mathfrak c$\,, 
a~map $f:\scc\kappa\to\scc\omega$ is continuous 
iff there exist $g:\kappa\times\omega\to\omega$ 
and $\mathfrak v\in\scc\omega$ such that 
$$
f(\mathfrak u)=\widetilde g(\mathfrak u,\mathfrak v)
$$
for all $\mathfrak u\in\scc\kappa$. Indeed, 
since $f[\kappa]$, the image of $\kappa$ under~$f$, 
is of cardinality~$\le\kappa\le\mathfrak c$\,, it 
has an $\le_{\RK}$-upper bound, so we can apply 
the same argument as in the proof of Lemma~\ref{l: cont}.

\par 
Lemma~\ref{l: closure via cont} is also easily 
extended to uncountable sets of ultrafilters: 
if $A\subseteq\scc\omega$ is of 
cardinality~$\kappa$\,($\le2^{\mathfrak c}$),
then the following are equivalent: 
\begin{enumerate}
\item[(i)] 
$\mathfrak u\in\cl_{\scc\omega}\,A\,;$
\item[(iia)] 
$\mathfrak u=\widetilde{f}(\mathfrak v)$
for any $f:\kappa\to\scc\omega$ such that 
$\ran{f}=A$ and some $\mathfrak v\in\scc\kappa\,;$ 
\item[(iib)]
$\mathfrak u=\widetilde f(\mathfrak v)$ 
for some $f:\kappa\to\scc\omega$ 
such that $\ran{f}\subseteq A$ and 
some $\mathfrak v\in\scc\kappa\,;$ 
\end{enumerate} 
and if, moreover, $\kappa\le\mathfrak c$\,, then each 
of {\rm (i)--(iib)} is equivalent to the following: 
\begin{enumerate}
\item[(iii)] 
$\mathfrak u=\widetilde{g}(\mathfrak v,\mathfrak w)$ 
for some $g:\kappa\times\omega\to\omega$ 
and some $\mathfrak v\in\scc\kappa$,
$\mathfrak w\in\scc\omega$  
such that 
$
\{\widetilde{g}(\alpha,\mathfrak w):
\alpha\in\kappa\}\subseteq A\,.
$ 
\end{enumerate}

\par 
Finally, using this, we can generalize 
Theorem~\ref{t: weak p-point} to characterize 
weak $P_{\kappa}$-points:
for every $\kappa<\mathfrak c$ and 
$\mathfrak u\in\scc\omega\setminus\omega$ 
the following are equivalent: 
\begin{enumerate}
\item[(i)] 
$\mathfrak u$~is a~weak $P_{\kappa^+}$-point$\,;$
\item[(iia)] 
for all $f:\kappa\to\scc\omega$ and 
$\mathfrak v\in\scc\kappa$ such that 
$\mathfrak u=\wt{f}(\mathfrak v)$ 
there exists $\alpha\in\kappa$ satisfying 
$f(\alpha)=\mathfrak u$ or $f(\alpha)\in\omega\,;$
\item[(iib)] 
for all $f:\kappa\to\scc\omega$ and 
$\mathfrak v\in\scc\kappa$ such that 
$\mathfrak u=\wt{f}(\mathfrak v)$ 
one of two sets  
$\{\alpha\in\kappa:f(\alpha)=\mathfrak u\}$ 
and $\{\alpha\in\kappa:f(\alpha)\in\omega\}$ 
belongs to~$\mathfrak v\,;$
\item[(iiia)]
for all $g:\kappa\times\omega\to\omega$ and 
$\mathfrak v\in\scc\kappa$, $\mathfrak w\in\scc\omega$ 
such that  
$\mathfrak u=\widetilde g(\mathfrak v,\mathfrak w)$ 
there exists $\alpha\in\kappa$ satisfying 
$\widetilde{g}(\alpha,\mathfrak w)=\mathfrak u$ 
or $\widetilde{g}(\alpha,\mathfrak w)\in\omega\,;$
\item[(iiib)]
for all $g:\kappa\times\omega\to\omega$ and 
$\mathfrak v\in\scc\kappa$, $\mathfrak w\in\scc\omega$ 
such that  
$\mathfrak u=\widetilde g(\mathfrak v,\mathfrak w)$ 
one of two sets  
$\{\alpha\in\kappa:g(\alpha,\mathfrak w)=\mathfrak u\}$ 
and 
$\{\alpha\in\kappa:g(\alpha,\mathfrak w)\in\omega\}$ 
belongs to~$\mathfrak v\,.$
\end{enumerate}

\par 
Since we shall not use these generalizations to obtain 
our main results, we leave it to the reader to write 
out simple modifications of the previous proofs.
\end{remark}

\begin{remark}\label{r: non-RB}
Recall that for all  
$\mathfrak u,\mathfrak v\in\scc\omega$ we have 
$\mathfrak u\le_{\RF}\mathfrak u\otimes\mathfrak v$ 
(since 
$
\widetilde{f}(\mathfrak u)\isom   
\mathfrak u\otimes\mathfrak v
$ 
whenever $\{f(i):i\in\omega\}$ is strongly discrete 
and $f(i)\isom\mathfrak v$ for all $i\in\omega$) and 
$\mathfrak v\le_{\RB}\mathfrak u\otimes\mathfrak v$ 
(since 
$
\widetilde{f}(\mathfrak u\otimes\mathfrak v)
=\mathfrak v
$
where a~finite-to-one $f:\omega^2\to\omega$ is defined 
by letting $f(i,j):=\max(i,j)$  for all $i,j\in\omega$). 
However, two other inequalities generally fail. 
In the case of $\le_{\RF}$, this is easy: 
if $\mathfrak u$ and $\mathfrak v$ 
are $\le_{\RK}$-incomparable, then 
$\mathfrak v\le_{\RF}\mathfrak u\otimes\mathfrak v$
fails (just because 
$\mathfrak u\le_{\RF}\mathfrak u\otimes\mathfrak v$ 
and $\le_{\RF}$ is tree-like). 
The case of $\le_{\RB}$ can be also easily handled 
by using our characterization of weak $P$-points 
(Theorem~\ref{t: weak p-point}); let us show that, 
again if $\mathfrak u$ and $\mathfrak v$ are 
$\le_{\RK}$-incomparable, then 
$\mathfrak u\le_{\RB}\mathfrak u\otimes\mathfrak v$ 
fails whenever $\mathfrak u$ is a~weak $P$-point. 

\par 
Indeed, toward a~contradiction, assume there is 
a~finite-to-one map $g:\omega^2\to\omega$ such that 
$\widetilde{g}(\mathfrak u,\mathfrak v)=\mathfrak u\,$. 
Since $\mathfrak u$ is a~weak $P$-point, by 
Theorem~\ref{t: weak p-point}(iiia), there is 
$i\in\omega$ such that one of the two cases holds: 
either $\widetilde{g}(i,\mathfrak v)\in\omega$,
or $\widetilde g(i,\mathfrak v)=\mathfrak u\,.$ 
Let us consider them separately. 
First define an auxiliary map $h:\omega\to\omega$ 
by letting $h(j):=f(i,j)$ for all $j<\omega$. Thus,  
$\widetilde h(\mathfrak v)=\widetilde g(i,\mathfrak v).$
Now, in the first case, we have 
$\widetilde{h}(\mathfrak v)\in\omega$, 
so $h$~is constant on some $A\in\mathfrak v\,$. 
Therefore, $g$~is constant on $\{i\}\times A$, 
and so $g$~is not finite-to-one, which contradicts to 
the assumption on~$g$.
And in the second case, we have 
$\widetilde h(\mathfrak v)=\mathfrak u\,,$ and so 
$\mathfrak u\le_{\RK}\mathfrak v\,$, which contradicts 
to the assumption that $\mathfrak u$ and~$\mathfrak v$ 
are $\le_{\RK}$-incomparable. 
\end{remark}

\section{Embedding of the lattice 
$(\mathscr P_{\omega}(2^{\mathfrak c}),\subseteq)$}%
\label{sec: first main thm}

\par 
In this section, we prove the first of two our main 
results, Theorem~\ref{t: embedding into RK etal}, 
which isomorphically embeds the lattice of finite 
subsets of a~set of cardinality~$2^{\mathfrak c}$ 
ordered by inclusion in $\scc\omega$ with 
the Rudin--Keisler preorder. Clearly, this solves 
Problem~61 of~\cite{Hart vanMill 2024} in the 
affirmative way. Moreover, our construction gives 
an embedding in the Comfort preorder as well, and even 
in any relation between the two preorders. 

\par 
We start with the following result, which shows that 
weak $P$-points lying $\le_{\Comf}$-below products 
of ultrafilters distribute $\le_{\RK}$-below some 
of their factors; this is vaguely reminiscent of 
Blass' result on ultrafilters below products of 
$\le_{\RK}$-minimal ultrafilters (see \cite{Blass 1970}, 
Theorem~21.1); however, our result is obtained in 
$\ZFC$ alone. 

\begin{thm}\label{t: weak p-point below product}
Let $\mathfrak u\in\scc\omega$ be a~weak $P$-point, 
$\mathfrak v_0,\ldots,\mathfrak v_n\in\scc\omega$ 
arbitrary ultrafiltres. If\,
$
\mathfrak u\le_{\Comf} 
\mathfrak v_0\otimes\ldots\otimes\mathfrak v_n
$\,, 
then\, $\mathfrak u\le_\RK\mathfrak v_k$ 
for some $0\le k\le n$.
\end{thm}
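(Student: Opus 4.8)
The plan is to carry out two successive reductions, each powered by the weak $P$-point characterization of Theorem~\ref{t: weak p-point}. First I would reduce the Comfort hypothesis to a Rudin--Keisler one, showing that a weak $P$-point lying $\le_\Comf$-below \emph{any} ultrafilter already lies $\le_\RK$-below it; then I would peel off the factors of the product one at a time by induction on~$n$. Set $\mathfrak w:=\mathfrak v_0\otimes\cdots\otimes\mathfrak v_n$ for the first stage.

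For the first reduction, recall that ${\le_\Comf}=R_{<\omega_1}=\bigcup_{\alpha<\omega_1}R_\alpha$, so the hypothesis $\mathfrak u\le_\Comf\mathfrak w$ gives $\mathfrak u\,R_\alpha\,\mathfrak w$ for some countable~$\alpha$. I would prove by transfinite induction on $\alpha$ that, for a weak $P$-point $\mathfrak u$, $\mathfrak u\,R_\alpha\,\mathfrak w$ implies $\mathfrak u\le_\RK\mathfrak w$. The case $\alpha=1$ is immediate since $R_1={\le_\RK}$ (and $\alpha=0$ is vacuous, as $\mathfrak u$ is nonprincipal). For $\alpha>1$, unfold $\mathfrak u\,R_\alpha\,\mathfrak w$ to get $f\colon\omega\to\scc\omega$ with $\wt f(\mathfrak w)=\mathfrak u$ and $f(i)\,R_{<\alpha}\,\mathfrak w$ for all~$i$; applying Theorem~\ref{t: weak p-point}(iib) with $\mathfrak v:=\mathfrak w$, either $\{i:f(i)\in\omega\}\in\mathfrak w$, which directly yields $\mathfrak u=\wt f(\mathfrak w)\le_\RK\mathfrak w$, or $\{i:f(i)=\mathfrak u\}\in\mathfrak w$, which produces an $i$ with $\mathfrak u=f(i)\,R_{<\alpha}\,\mathfrak w$, i.e.\ $\mathfrak u\,R_\beta\,\mathfrak w$ for some $\beta<\alpha$, so the induction hypothesis applies. (Alternatively, this first reduction is exactly the coincidence of $\le_\Comf$ and $\le_\RK$ on weak $P$-points from~\cite{Garcia-Ferreira 1993}.)

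For the second reduction I would induct on~$n$, the case $n=0$ being trivial. Write $\mathfrak w:=\mathfrak v_1\otimes\cdots\otimes\mathfrak v_n$, so $\mathfrak u\le_\RK\mathfrak v_0\otimes\mathfrak w$ supplies some $F\colon\omega\times\omega\to\omega$ with $\wt F(\mathfrak v_0,\mathfrak w)=\mathfrak u$ (using the Fubini identity $\wt F(\mathfrak v_0\otimes\mathfrak w)=\wt F(\mathfrak v_0,\mathfrak w)$). Right-continuity of the extension lets me rewrite this as $\mathfrak u=\wt h(\mathfrak v_0)$, where $h\colon\omega\to\scc\omega$ is given by $h(x):=\wt F(x,\mathfrak w)=\wt{F_x}(\mathfrak w)$ with $F_x(y):=F(x,y)$; crucially each value satisfies $h(x)\le_\RK\mathfrak w$. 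Now apply Theorem~\ref{t: weak p-point}(iib) to $\mathfrak u=\wt h(\mathfrak v_0)$: if $\{x:h(x)\in\omega\}\in\mathfrak v_0$ then $\mathfrak u\le_\RK\mathfrak v_0$ and we finish with $k=0$; if instead $\{x:h(x)=\mathfrak u\}\in\mathfrak v_0$ then some $x$ gives $\mathfrak u=h(x)\le_\RK\mathfrak w=\mathfrak v_1\otimes\cdots\otimes\mathfrak v_n$, and the induction hypothesis yields $\mathfrak u\le_\RK\mathfrak v_k$ for some $1\le k\le n$.

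The routine but essential bookkeeping is the chain of Fubini-type identities $\wt F(\mathfrak v_0\otimes\mathfrak w)=\wt F(\mathfrak v_0,\mathfrak w)=\wt h(\mathfrak v_0)$, which converts the single-variable Rudin--Keisler witness into a map $h$ whose values are controlled ($h(x)\le_\RK\mathfrak w$) so that the weak $P$-point dichotomy applies. I expect the real crux, rather than any hard computation, to be the role of that dichotomy: without the weak $P$-point hypothesis the alternative ``$h(x)=\mathfrak u$ on a large set'' need not occur, and $\mathfrak u$ could genuinely depend on both coordinates. It is precisely the weak $P$-point property that forbids $\mathfrak u$ from sitting in the closure of the countably many distinct values $h(x)$ without being equal to one of them (or arising from~$\omega$), and this is what drives both inductions.
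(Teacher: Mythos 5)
Your proof is correct and follows essentially the same route as the paper's: reduce the Comfort hypothesis to a Rudin--Keisler one via the coincidence of the two preorders on weak $P$-points, then induct on~$n$, applying the dichotomy of Theorem~\ref{t: weak p-point}(iib) to the map $x\mapsto\widetilde{F}(x,\mathfrak w)$ and converting the ``values in $\omega$'' branch into a witness $\omega\to\omega$ exactly as the paper does. The only differences are cosmetic: you peel off the leftmost factor rather than the rightmost, and you sketch a self-contained transfinite induction along the $R_\alpha$ hierarchy for the first reduction where the paper simply cites Garc{\'\i}a-Ferreira's Theorem~2.10.
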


\begin{proof}
Let us first recall that, whenever $\mathfrak u$ 
is a~weak $P$-point and $\mathfrak v$ an arbitrary 
ultrafilter, then $\mathfrak u\leq_{\Comf}\mathfrak v$ 
implies $\mathfrak u\leq_{\RK}\mathfrak v$ 
(see~\cite{Garcia-Ferreira 1993}, Theorem~2.10), 
so we can assume 
$
\mathfrak u\le_\RK
\mathfrak v_0\otimes\ldots\otimes\mathfrak v_n\,. 
$ 
Now we prove the statement by induction on~$n$. 

\par 
For $n=0$, the assertion holds trivially. 
Let $n=1$, so
$\mathfrak u\le_\RK\mathfrak v_0\otimes\mathfrak v_1$\,,
and let $g:\omega^2\to\omega$ be such that
$
\mathfrak u=
\widetilde g(\mathfrak v_0\otimes\mathfrak v_1)=
\widetilde g(\mathfrak v_0\,,\mathfrak v_1),
$
where the map $g$ on the left is treated as unary 
and on the right as binary.
Then, by Theorem~\ref{t: weak p-point}, either the set
$
A:=\{i\in\omega:
\widetilde{g}(i,\mathfrak v_1)=\mathfrak u\}
$ 
or the set 
$
B:=\{i\in\omega:
\widetilde{g}(i,\mathfrak v_1)\in\omega\}
$
belongs to~$\mathfrak v_0$\,. Let us consider 
both cases separately. 

\par 
If $A\in\mathfrak v_0$\,, pick any $i\in A$. Then
$
\widetilde{g}(i\otimes\mathfrak v_1)=
\widetilde{g}(i,\mathfrak v_1)=
\mathfrak u
$ 
(where again the map $g$ on the left is treated 
as unary and on the right as binary), and so, 
$\mathfrak u\le_{\RK}i\otimes\mathfrak v_1$\,. 
Since $i\otimes\mathfrak v_1$ is isomorphic 
to~$\mathfrak v_1$\,, we get 
$\mathfrak u\le_\RK\mathfrak v_1$\,. 

\par 
If $B\in\mathfrak v_0$\,, define
$f:\omega\to\omega$ by letting 
$$
f(i):=
\begin{cases}
\widetilde{g}(i,\mathfrak v_1)
&\text{if }i\in B,
\\
0
&\text{otherwise}.
\end{cases}
$$
Since $\widetilde{g}$ is right-continuous, 
the map~$h$ defined by letting 
$
h(\mathfrak v):=
\widetilde{g}(\mathfrak v\,,\mathfrak v_1) 
$
is continuous, hence, $h$~coincides with 
the map~$\widetilde{f}$ on~$\widetilde{B}$  
(since they coincide on~$B$). So we get 
$
\widetilde{f}(\mathfrak v_0)=
h(\mathfrak v_0)=
\widetilde{g}(\mathfrak v_0\,,\mathfrak v_1)=
\mathfrak u\,,
$ 
and, therefore, $\mathfrak u\le_\RK\mathfrak v_0$\,. 

\par 
Thus, for $n=1$, the assertion also holds. 
Further we argue by induction. Assume that 
holds for~$n$. Let
$
\mathfrak v:=
\mathfrak v_0\otimes\ldots\otimes\mathfrak v_n\,,
$ 
and assume 
$
\mathfrak u\le_{\RK}
\mathfrak v_0\otimes\ldots\otimes
\mathfrak v_n\otimes\mathfrak v_{n+1}
\isom\mathfrak v\otimes\mathfrak v_{n+1}
$\,. 
Then either $\mathfrak u\le_\RK\mathfrak v$, 
and so, $\mathfrak u\le_\RK\mathfrak v_k$ for 
some $0\le k\le n$ by the induction hypothesis, 
or $\mathfrak u\le_\RK\mathfrak v_{n+1}$\,. 

\par 
The proof is complete.
\end{proof}

\begin{remark}
Theorem~\ref{t: weak p-point below product} 
admits some generalizations. 

\par 
A~trivial generalization is obtained by 
replacing the ultrafilter multiplication by 
the ultraextension of an arbitrary operation:
let $\mathfrak u\in\scc\omega$ be a~weak $P$-point, 
then for any $e:\omega^{n+1}\to\omega$ and arbitrary 
$\mathfrak v_0,\ldots,\mathfrak v_n\in\scc\omega$,  
$
\mathfrak u\le_\RK
\widetilde{e}(\mathfrak v_0,\ldots,\mathfrak v_n)
$
implies $\mathfrak u\le_\RK\mathfrak v_k$ 
for some $k\in\omega$
(this is evident from 
$
\widetilde{e}(\mathfrak v_0,\ldots,\mathfrak v_n)
\leq_{\RK}
\mathfrak v_0\otimes\ldots\otimes\mathfrak v_n
$). 

\par 
Also Theorem~\ref{t: weak p-point below product} 
can be generalized to weak $P_{\kappa}$-points
as follows. Suppose $\kappa<\mathfrak c$\,, 
$\mathfrak u\in\scc\omega$ is 
a~weak $P_{\kappa^+}$-point,  
and $\mathfrak v_0\in\scc\kappa$ and 
$\mathfrak v_1,\ldots,\mathfrak v_{n}\in\scc\omega$ 
arbitrary. If 
$
\mathfrak u\le_\RK
\mathfrak v_0\otimes\mathfrak v_1
\otimes\ldots\otimes\mathfrak v_n
$\,, 
then $\mathfrak u\le_\RK\mathfrak v_k$ 
for some $0\le k\le n$. To prove this, use 
the characterization of weak $P_{\kappa^+}$-points 
mentioned in Remark~\ref{r: larger kappa} 
and modify the proof above. 
\end{remark}

\begin{thm}[The First Main Theorem]%
\label{t: embedding into RK etal}
There exists a~map 
$e:\mathscr P_\omega(\scc\omega)\to\scc\omega$ 
such that for all 
$X,Y\in\mathscr P_\omega(\scc\omega)$, 
\begin{enumerate}
\item[(i)] 
if $X\subseteq Y$ then $e(X)\le_\RK e(Y)\,;$
\item[(ii)] 
if $e(X)\leq_{\Comf}e(Y)$ then $X\subseteq Y$. 
\end{enumerate}
Consequently, $e$~is an isomorphic embedding of 
the lattice $(\mathscr P_\omega(\scc\omega),\subseteq)$ 
into the structure $(\scc\omega,R)$ whenever 
${\le_\RK}\subseteq{R}\subseteq{\le_\Comf}$\,. 
\end{thm}

\begin{proof}
By~\cite{Simon 1985}, there exists 
$A\subseteq\scc\omega$ consisting of pairwise 
$\le_{\RK}$-incomparable weak $P$-points 
such that $|A|=|\scc\omega|$.
Fix such an~$A$ and enumerate its points by 
$\mathfrak u_\alpha$\,, $\alpha<2^{\mathfrak c}$\,. 
We shall construct the required map~$e$ whose 
domain is rather $\mathscr P_\omega(A)$ 
than $\mathscr P_\omega(\scc\omega)$.
For $X\in\mathscr P_\omega(A)$, if 
$
X=\{
\mathfrak u_{\alpha_0},\ldots,\mathfrak u_{\alpha_n}
\}
$
and $\alpha_0<\ldots<\alpha_n<2^{\mathfrak c}$\,, 
let 
$$
e(X):=
\mathfrak u_{\alpha_0}
\otimes
\ldots
\otimes
\mathfrak u_{\alpha_n}\,. 
$$
Let us verify that, for all 
$X,Y\in\mathscr P_{\omega}(A)$, 
both (i) and~(ii) hold. 
Let 
$
X:=
\{\mathfrak u_{\alpha_0},
\ldots,
\mathfrak u_{\alpha_m}
\},
$
$\alpha_0<\ldots<\alpha_m<2^{\mathfrak c}$, and  
$
Y:=
\{
\mathfrak u_{\beta_0},
\ldots, 
\mathfrak u_{\beta_n}
\},
$
$\beta_0<\ldots<\beta_n<2^{\mathfrak c}$.

\par 
(i). 
Clear.  

\par 
(ii).  
Assume $e(X)\leq_{\Comf}e(Y)$. A~fortiori, 
we have $\mathfrak u_{\alpha_i}\leq_{\Comf}e(Y)$ 
for all $i\le m$. Therefore, 
by Theorem~\ref{t: weak p-point below product}, 
for every $i\le m$ there exists $j\le n$ such that 
$
\mathfrak u_{\alpha_i}
\leq_{\RK}
\mathfrak u_{\beta_j}\,.
$ 
Since $A$ consists of pairwise $\leq_{\RK}$-incomparable 
ultrafilters, we conclude that 
$\mathfrak u_{\alpha_i}=\mathfrak u_{\beta_j}$\,, 
i.e., that $X\subseteq Y$. 

\par 
The theorem is proved. 
\end{proof}

\par 
Recall that all $R_{1+\alpha}$ form a~natural hierarchy 
of relations lying between $\le_{\RK}$ and~$\le_{\Comf}$ 
(see Section~\ref{sec: def}); therefore, 
Theorem~\ref{t: embedding into RK etal} holds with 
$R_{1+\alpha}$ for any $\alpha<\omega_1$\,, in 
particular, with any preorder~$\le_{\alpha}$\,.

\par 
As an easy consequence, we establish that $\scc\omega$ 
endowed with the Rudin--Keisler order, and in fact 
with any relation between it and the Comfort order, 
is {\it universal} for partially ordered sets of 
a~certain class, i.e., contains isomorphic copies 
of all of its elements.

\par 
Let us say that a~partially ordered set $(X,\le)$ 
has the {\it local cardinality}~$<\kappa$ iff, 
for all $x\in X$, the lower cone $\{y\in X:y\le x\}$ 
has the cardinality~$<\kappa$. The expressions like 
{\it local cardinality}~$\le\kappa$, or 
{\it locally finite}, etc., have the expected meaning. 

\begin{lm}\label{l: universality}
Let $\kappa\le\lambda^+$. Then 
$(\mathscr P_\kappa(\lambda),\subseteq)$ is universal 
for partially ordered sets of cardinality~$\le\lambda$ 
and local cardinality~$<\kappa$.
\end{lm}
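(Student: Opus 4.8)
The plan is to use the canonical order-embedding of an arbitrary poset into the lattice of its principal down-sets, and then to transport those down-sets onto small subsets of~$\lambda$. Concretely, given a poset $(X,\le)$ of cardinality~$\le\lambda$ and local cardinality~$<\kappa$, I would first fix an injection $\iota\colon X\to\lambda$, which exists precisely because $|X|\le\lambda$. Then I would define the candidate embedding $e\colon X\to\mathscr P(\lambda)$ by letting $e(x):=\iota[\{y\in X:y\le x\}]$, i.e.\ the $\iota$-image of the principal down-set of~$x$.

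First I would check that $e$ actually takes values in $\mathscr P_\kappa(\lambda)$: the down-set $\{y\in X:y\le x\}$ has cardinality~$<\kappa$ by the local-cardinality hypothesis, and since $\iota$ is injective its image has the same cardinality, so $|e(x)|<\kappa$ and hence $e(x)\in\mathscr P_\kappa(\lambda)$. Next I would verify that $e$ reflects and preserves the order. For preservation, if $x\le x'$ then transitivity gives $\{y:y\le x\}\subseteq\{y:y\le x'\}$, whence $e(x)\subseteq e(x')$. For reflection, suppose $e(x)\subseteq e(x')$; injectivity of $\iota$ yields $\{y:y\le x\}\subseteq\{y:y\le x'\}$, and since reflexivity gives $x\in\{y:y\le x\}$, we obtain $x\le x'$. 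Thus $x\le x'$ iff $e(x)\subseteq e(x')$, and in particular $e$ is injective by antisymmetry, so $e$ is the desired isomorphic embedding of $(X,\le)$ into $(\mathscr P_\kappa(\lambda),\subseteq)$.

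Honestly there is little real obstacle here: the entire content is the observation that the down-set map is the universal order-embedding, plus bookkeeping of cardinalities. The one step that requires genuine care is the reflection of the order, since it relies essentially on reflexivity (that $x$ itself lies in its own down-set) together with injectivity of $\iota$; without these, inclusion of images would not recover the original order. I would also remark that the hypothesis $\kappa\le\lambda^+$ merely delimits the non-degenerate range of the statement—for $\kappa>\lambda^+$ one has $\mathscr P_\kappa(\lambda)=\mathscr P(\lambda)$ and the local-cardinality condition becomes vacuous—while the argument itself uses only that $|X|\le\lambda$ and that each principal down-set has cardinality~$<\kappa$.
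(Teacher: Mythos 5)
Your proposal is correct and is essentially the paper's own argument: the authors likewise send $x$ to its principal down-set $\{y\in X:y\le x\}$, viewed inside $\mathscr P_\kappa(X)$ (the transport to $\lambda$ via an injection being left implicit). You have merely written out the routine verifications that the paper compresses into the phrase ``a standard argument.''
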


\begin{proof}
A~standard argument: if $(X,\le)$ has 
the local cardinality~$<\kappa$, the map 
$f:X\to\mathscr P_\kappa(X)$ defined by letting 
$f(x):=\{y\in X:y\le x\}$ for all $x\in X$, 
isomorphically embeds $(X,\le)$ into 
$(\mathscr P_\kappa(X),\subseteq)$.
\end{proof}

\par 
Note that, if a~partially ordered set $(X,\le)$ 
is embeddable in $(\scc\omega,R)$ with some 
${R}\subseteq{\le_\Comf}$\,, then it has 
cardinality~$\le2^{\mathfrak c}$ and 
local cardinality~$\le\mathfrak c$\,, 
therefore, by Lemma~\ref{l: universality}, 
it is embeddable in the lattice 
$
(\mathscr P_{\mathfrak c^+}
(2^{\mathfrak c}),\subseteq). 
$ 

\begin{cor}\label{c: embedding into RK etal}
Let ${\le_\RK}\subseteq{R}\subseteq{\le_\Comf}$\,.
Then $(\scc\omega,R)$ is universal for partially 
ordered sets that are locally finite and have 
the cardinality not greater than~$2^{\mathfrak c}$. 
\end{cor}

\begin{proof}
Lemma~\ref{l: universality} and 
Theorem~\ref{t: embedding into RK etal}.
\end{proof}

\par 
In particular, $(\scc\omega,R_{1+\alpha})$ 
has the specified universality property, 
for all $\alpha<\omega_1$\,. 

\begin{remark}\label{r: embedding into RF}
A~variant of this argument shows that 
there exists an isomorphic embedding~$e$ of 
the tree $((\scc\omega)^{<\omega},\subseteq)$ 
into $(\scc\omega,\le_{\RF})$. (Here 
$X^{<\alpha}:=\bigcup_{\beta<\alpha}X^\beta$, 
so for $s,t\in X^{<\alpha}$, $s\subseteq t$
iff $t$ end-extends~$s$.)
Indeed, if $A$ consists 
of $2^{\mathfrak c}$~pairwise $\le_{\RF}$-incomparable 
ultrafilters (even not necessarily weak $P$-points), 
then letting for all $s\in A^{<\omega}$, 
$
e(s):=
\mathfrak u_{s(0)}
\otimes
\ldots
\otimes
\mathfrak u_{s(n)}
$
where $n+1:=\dom s$, we see that 
$s\subseteq t$ implies $e(s)\le_{\RF}e(t)$, 
and the converse implication holds since 
the preorder $\le_{\RF}$ is tree-like. Consequently, 
$(\scc\omega,\le_{\RF})$ is universal for locally 
finite trees of cardinality~$\le2^{\mathfrak c}$. 
\end{remark}


\section{Embedding of the lattice 
$(\mathscr P_{\omega_1}(\omega_1),\subseteq)$}%
\label{sec: second main thm}

\par 
In this section, we prove our second main result,  
Theorem~\ref{t: embedding of countable into RK etal},
which embeds the lattice of countable subsets of 
$\omega_1$ ordered by inclusion in $\scc\omega$ with 
the Rudin--Keisler preorder and, in fact, with 
any relation between it and the Comfort preorder. 
In some ways, the reasoning generalizes those 
from the previous section. 

\par 
Let $\Lim(\omega_1)$ denote the set of nonzero 
limit ordinals $\alpha<\omega_1$\,. Pick a~so-called 
{\it ladder system} for $\Lim(\omega_1)$, in which 
thus for each $\alpha\in\Lim(\omega_1)$ there is fixed 
a~certain $\omega$-sequence $(\alpha_i)_{i\in\omega}$ 
that is increasing and cofinal in~$\alpha$. 
Let 
$
\bm{\mathfrak u}:=
(\mathfrak u_\alpha)_{\alpha<\omega_1}
$ 
be an $\omega_1$-sequence of ultrafilters 
in $\scc\omega$. For each $\alpha<\omega_1$ 
define a~map 
$f_{\bm{\mathfrak u},\alpha}:\omega\to\scc\omega$ 
and the ultrafilter  
$$
\mathfrak v_{\bm{\mathfrak u},\alpha}:=
\widetilde{f_{\bm{\mathfrak u},\alpha}}
(\mathfrak u_{\alpha})
$$ 
by recursion on~$\alpha$ as follows:  
\begin{enumerate}
\item[(i)]
$f_{\bm{\mathfrak u},0}$ is any injective map 
with the values in~$\omega$, so 
$\mathfrak v_{\bm{\mathfrak u},0}\isom\mathfrak u_0$ 
(it suffices to let $f_{\bm{\mathfrak u},0}:=\id_\omega$\,, 
so $\mathfrak v_{\bm{\mathfrak u},0}=\mathfrak u_0$);
\item[(ii)] 
if $\alpha=\beta+1$, then $f_{\bm{\mathfrak u},\alpha}$ 
is any injective map of $\omega$ into~$\scc\omega$ with 
a~strongly discrete range satisfying 
$
f_{\bm{\mathfrak u},\alpha}(i)
\isom
\mathfrak v_{\bm{\mathfrak u},\beta}
$ 
for all $i<\omega$\,;
\item[(iii)] 
if $\alpha$ is a~limit ordinal, 
then $f_{\bm{\mathfrak u},\alpha}$ is 
any injective map of $\omega$ into~$\scc\omega$ 
with a~strongly discrete range satisfying 
$
f_{\bm{\mathfrak u},\alpha}(i)
\isom
\mathfrak v_{\bm{\mathfrak u},\alpha_i}
$ 
for all $i<\omega$\,. 
\end{enumerate} 
(Let us emphasize that, although a~choice of maps 
$f_{\bm{\mathfrak u},\alpha}$ satisfying 
the specified properties can be arbitrary for each 
sequence~$\bm{\mathfrak u}$\,, our ladder system is 
fixed in advance for all such functions and sequences.) 

\begin{remark}\label{r: barriers}
Alternatively, instead of fixed ladder systems, 
the limit step~$\alpha$ of the construction can be 
handled by using uniform barriers of rank~$\alpha$ 
on~$\omega$. The idea of using barriers as 
the base sets for iterated Fubini products, due 
to Todor\v{c}evi\'{c}, was described by Dobrinen 
in Section~3.8 of~\cite{Dobrinen 2021}, and with 
more detail in Section~3 of~\cite{Dobrinen 2020}.    
\end{remark}

\par 
It is easy to see that for every $\beta<\omega_1$ 
we have: 
$$
\mathfrak v_{\bm{\mathfrak u},\beta+1}
\isom 
\mathfrak u_{\beta+1}
\otimes
\mathfrak v_{\bm{\mathfrak u},\beta}\,. 
\eqno{(\ast)}
$$
In particular, if $n<\omega$, we have 
$
\mathfrak v_{\bm{\mathfrak u},n+1}\isom 
\mathfrak u_{n+1}\otimes\ldots\otimes\mathfrak u_0\,. 
$

\begin{lm}\label{lm: u leq v}
Let 
$
\bm{\mathfrak u}:=
(\mathfrak u_\alpha)_{\alpha<\omega_1}
$ 
be an $\omega_1$-sequence of ultrafilters in 
$\scc\omega$ where all the ultrafilters 
$\mathfrak u_\alpha$ with limit indices 
$\alpha<\omega_1$ are nonprincipal. Then 
for every $\beta\leq\alpha<\omega_1$ we have:  
$$
\mathfrak u_{\beta}
\le_{\RK}
\mathfrak v_{\bm{\mathfrak u},\alpha}\,.
$$
\end{lm}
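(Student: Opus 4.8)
The plan is to prove the statement by transfinite induction on $\alpha<\omega_1$, establishing the assertion $P(\alpha)$: \emph{for every $\beta\le\alpha$ one has $\mathfrak u_\beta\le_\RK\mathfrak v_{\bm{\mathfrak u},\alpha}$}. The base case $\alpha=0$ is immediate, since $\mathfrak v_{\bm{\mathfrak u},0}\isom\mathfrak u_0$ and $\le_\RK$ is reflexive.

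For the successor step, suppose $P(\gamma)$ and let $\alpha=\gamma+1$. I would invoke the identity $(\ast)$, namely $\mathfrak v_{\bm{\mathfrak u},\gamma+1}\isom\mathfrak u_{\gamma+1}\otimes\mathfrak v_{\bm{\mathfrak u},\gamma}$, together with the basic fact that $\mathfrak u<_\RK\mathfrak u\otimes\mathfrak v$ and $\mathfrak v<_\RK\mathfrak u\otimes\mathfrak v$. These give at once $\mathfrak u_{\gamma+1}\le_\RK\mathfrak v_{\bm{\mathfrak u},\gamma+1}$ (the case $\beta=\alpha$) and $\mathfrak v_{\bm{\mathfrak u},\gamma}\le_\RK\mathfrak v_{\bm{\mathfrak u},\gamma+1}$; composing the latter with the inductive bound $\mathfrak u_\beta\le_\RK\mathfrak v_{\bm{\mathfrak u},\gamma}$ for $\beta\le\gamma$ and using transitivity of $\le_\RK$ finishes this case.

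The limit step is the heart of the argument and the place I expect the only real difficulty. Fix a limit $\alpha$ with ladder $(\alpha_i)_{i<\omega}$. The case $\beta=\alpha$ is handled by the projection built into the construction: since $\mathfrak v_{\bm{\mathfrak u},\alpha}=\widetilde{f_{\bm{\mathfrak u},\alpha}}(\mathfrak u_\alpha)$ with $f_{\bm{\mathfrak u},\alpha}$ injective and of (strongly) discrete range, one has $\mathfrak u_\alpha\le_\RF\mathfrak v_{\bm{\mathfrak u},\alpha}$, hence $\mathfrak u_\alpha\le_\RK\mathfrak v_{\bm{\mathfrak u},\alpha}$. For $\beta<\alpha$ I would fix a pairwise disjoint family $\{A_i\}_{i<\omega}$ partitioning $\omega$ with $A_i\in f_{\bm{\mathfrak u},\alpha}(i)$, and pick $i_0$ with $\alpha_i\ge\beta$ for all $i\ge i_0$ (possible as the ladder is cofinal in $\alpha$). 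For each $i\ge i_0$ the inductive hypothesis $P(\alpha_i)$ gives $\mathfrak u_\beta\le_\RK\mathfrak v_{\bm{\mathfrak u},\alpha_i}\isom f_{\bm{\mathfrak u},\alpha}(i)$, so there is $h_i:\omega\to\omega$ with $\widetilde{h_i}(f_{\bm{\mathfrak u},\alpha}(i))=\mathfrak u_\beta$. I then \emph{glue} these along the partition: define $g:\omega\to\omega$ by $g\uhr A_i:=h_i\uhr A_i$ for $i\ge i_0$ and arbitrarily elsewhere. Because $A_i\in f_{\bm{\mathfrak u},\alpha}(i)$, for each $i\ge i_0$ and each $T\subseteq\omega$ one checks the equivalence $g^{-1}(T)\in f_{\bm{\mathfrak u},\alpha}(i)\iff T\in\mathfrak u_\beta$.

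The final point is where the nonprincipality hypothesis on $\mathfrak u_\alpha$ is essential. From this equivalence, the set $\{i:g^{-1}(T)\in f_{\bm{\mathfrak u},\alpha}(i)\}$ contains the cofinite set $\{i\ge i_0\}$ when $T\in\mathfrak u_\beta$, and is contained in the finite set $\{i<i_0\}$ otherwise. Since $\alpha$ is a limit index, $\mathfrak u_\alpha$ is nonprincipal, hence contains every cofinite set and no finite set; therefore $g^{-1}(T)\in\mathfrak v_{\bm{\mathfrak u},\alpha}=\widetilde{f_{\bm{\mathfrak u},\alpha}}(\mathfrak u_\alpha)$ precisely when $T\in\mathfrak u_\beta$, i.e.\ $\widetilde g(\mathfrak v_{\bm{\mathfrak u},\alpha})=\mathfrak u_\beta$ and $\mathfrak u_\beta\le_\RK\mathfrak v_{\bm{\mathfrak u},\alpha}$. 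I would stress that one should \emph{not} try to route the limit case through an inequality such as $\mathfrak v_{\bm{\mathfrak u},\alpha_i}\le_\RK\mathfrak v_{\bm{\mathfrak u},\alpha}$, which fails in general (a single Frol{\'\i}k summand need not lie $\le_\RK$ below the sum); the gluing succeeds only because the \emph{same} target $\mathfrak u_\beta$ is realized on cofinitely many blocks, which nonprincipality then promotes to a single Rudin--Keisler reduction.
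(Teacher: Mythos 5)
Your proof is correct and follows essentially the same route as the paper's: induction on $\alpha$, the successor step via $(\ast)$ and the projections of the tensor product, and the limit step by noting that $\mathfrak u_\beta$ lies $\le_\RK$-below $f_{\bm{\mathfrak u},\alpha}(j)$ for all $j\ge i_0$ and then using nonprincipality of $\mathfrak u_\alpha$ (with the case $\beta=\alpha$ handled separately through the discreteness of $\ran f_{\bm{\mathfrak u},\alpha}$). The only difference is that you spell out the gluing of the maps $h_i$ along a disjoint family $\{A_i\}$, a step the paper leaves implicit.
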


\begin{proof}
By induction on~$\alpha$. 
The induction basis ($\alpha=0$) is obvious.   
	
\par 
Let $\alpha>0$ and assume the induction hypothesis 
has already been proved for all $\beta<\alpha$. 
If $\alpha=\beta+1$, 
use~$(\ast)$ and the induction hypothesis. Assume 
now $\alpha$ is a~limit ordinal and $\beta<\alpha$. 
Then there exists an index $i_0\in\omega$ such that 
$\beta\leq\alpha_j$ for all~$j$, $i_0\leq j<\omega$. 
By the induction hypothesis, for all~$j$, 
$i_0\leq j<\omega$, we have:  
$$
\mathfrak u_{\beta}
\leq_{\RK}
\mathfrak v_{\bm{\mathfrak u},\alpha_j}
=
f_{\bm{\mathfrak u},\alpha}(j)\,.
$$ 
Since the ultrafilter $\mathfrak u_\alpha$ 
is nonprincipal, we get: 
$$
\mathfrak u_{\beta}
\leq_{\RK}
\widetilde{f_{\bm{\mathfrak u},\alpha}}
(\mathfrak u_\alpha)=
\mathfrak v_{\bm{\mathfrak u},\alpha}\,.
$$
Assume, finally, $\alpha=\beta$ is a~limit ordinal. 
Then (denoting by $\id{X}$ the identity map on~$X$) 
for all $i\in\omega$ we have:
$$
\id_{\omega}(i)
\leq_{\RK}
f_{\bm{\mathfrak u},\alpha}(i)\,,
$$
whence (since 
$\widetilde{\id_{\omega}}=\id_{\scc\omega}$) 
we get:  
$$
\mathfrak u_\alpha=
\widetilde{\id_{\omega}}(\mathfrak u_\alpha)
\leq_{\RK}
\widetilde{f_{\bm{\mathfrak u},\alpha}}
(\mathfrak u_\alpha)=
\mathfrak v_{\bm{\mathfrak u},\alpha}\,.
$$
The proof is complete. 
\end{proof}

\par 
Given an $\omega_1$-sequence 
$
\bm{\mathfrak u}:=
(\mathfrak u_\alpha)_{\alpha<\omega_1}
$ 
of ultrafilters in $\scc\omega$ and 
$X\subseteq\omega_1$\,, let 
$
\bm{\mathfrak u}^X:=
(\mathfrak u^{X}_\alpha)_{\alpha<\omega_1}
$ 
denote its $X$-{\it modification} defined by letting 
$$
\mathfrak u^{X}_\alpha:=
\begin{cases}
\mathfrak u_\alpha
&\text{if }\alpha\in X\cup\Lim(\omega_1), 
\\
0
&\text{otherwise}.
\end{cases}
$$
For better readability of the notation, 
let henceforth
$$
f_{X,\bm{\mathfrak u},\alpha}:=
f_{\bm{\mathfrak u}^{X}\!,\,\alpha}
\;\text{ and }\;
\mathfrak v_{X,\bm{\mathfrak u},\alpha}:=
\mathfrak v_{\bm{\mathfrak u}^{X}\!,\,\alpha}\,.
$$

\begin{lm}\label{lm: subseteq to leq_RK}
Let 
$
\bm{\mathfrak u}:=
(\mathfrak u_\alpha)_{\alpha<\omega_1}
$ 
be an $\omega_1$-sequence of ultrafilters 
in $\scc\omega\setminus\omega$. Then, for all 
$\beta\leq\alpha<\omega_1$ and $X,Y\subseteq\omega_1$\,,
$$
X\subseteq Y
\;\text{ implies }\;
\mathfrak v_{X,\bm{\mathfrak u},\beta}
\leq_{\RK}
\mathfrak v_{Y,\bm{\mathfrak u},\alpha}\,.
$$
\end{lm}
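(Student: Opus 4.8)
The plan is to establish the two-parameter monotonicity by a single transfinite induction on $\alpha$, after isolating the one genuinely nontrivial ingredient: that ultrafilter extensions preserve $\le_\RK$ \emph{pointwise} when the dominating map has strongly discrete range. Concretely, I would first record the following uniformization fact. Suppose $f':\omega\to\scc\omega$ has strongly discrete range, $\mathfrak w\in\scc\omega$, and $g':\omega\to\scc\omega$ satisfies $g'(i)\le_\RK f'(i)$ for all $i$ in some set belonging to~$\mathfrak w$; then $\widetilde{g'}(\mathfrak w)\le_\RK\widetilde{f'}(\mathfrak w)$.

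To prove this, use strong discreteness to fix pairwise disjoint $B_i\in f'(i)$, and for each relevant~$i$ a witness $g_i:\omega\to\omega$ with $\widetilde{g_i}(f'(i))=g'(i)$. Glue these into a single $g:\omega\to\omega$ by setting $g\uhr B_i:=g_i\uhr B_i$ (and arbitrarily off $\bigcup_i B_i$). Since $B_i\in f'(i)$, the value $\widetilde g(f'(i))$ depends only on $g\uhr B_i$, so $\widetilde g(f'(i))=g'(i)$ for every relevant~$i$; hence the map $i\mapsto\widetilde g(f'(i))$ agrees with $g'$ on a set in~$\mathfrak w$. As $\widetilde g\circ\widetilde{f'}=\widetilde{(i\mapsto\widetilde g(f'(i)))}$ by uniqueness of continuous extensions, we obtain $\widetilde g(\widetilde{f'}(\mathfrak w))=\widetilde{g'}(\mathfrak w)$, that is $\widetilde{g'}(\mathfrak w)\le_\RK\widetilde{f'}(\mathfrak w)$.

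Then I would prove, by induction on $\alpha<\omega_1$, the full statement: for all $\beta\le\alpha$ and all $X\subseteq Y$, $\mathfrak v_{X,\bm{\mathfrak u},\beta}\le_\RK\mathfrak v_{Y,\bm{\mathfrak u},\alpha}$. For $\alpha=0$ we have $\beta=0$ and $\mathfrak v_{X,\bm{\mathfrak u},0}=\mathfrak u^X_0\le_\RK\mathfrak u^Y_0=\mathfrak v_{Y,\bm{\mathfrak u},0}$, since $X\subseteq Y$ forces $\mathfrak u^X_0$ to be either $\mathfrak u^Y_0$ or the principal ultrafilter~$0$ (which is $\le_\RK$ everything). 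For $\alpha=\gamma+1$: if $\beta\le\gamma$, then $\mathfrak v_{X,\bm{\mathfrak u},\beta}\le_\RK\mathfrak v_{Y,\bm{\mathfrak u},\gamma}$ by the induction hypothesis, while $\mathfrak v_{Y,\bm{\mathfrak u},\gamma}\le_\RK\mathfrak v_{Y,\bm{\mathfrak u},\gamma+1}$ because $(\ast)$ gives $\mathfrak v_{Y,\bm{\mathfrak u},\gamma+1}\isom\mathfrak u^Y_{\gamma+1}\otimes\mathfrak v_{Y,\bm{\mathfrak u},\gamma}$ and a right factor is always $\le_\RK$ its product; if $\beta=\gamma+1$, I compare $\mathfrak u^X_{\gamma+1}\otimes\mathfrak v_{X,\bm{\mathfrak u},\gamma}$ with $\mathfrak u^Y_{\gamma+1}\otimes\mathfrak v_{Y,\bm{\mathfrak u},\gamma}$ using the induction hypothesis $\mathfrak v_{X,\bm{\mathfrak u},\gamma}\le_\RK\mathfrak v_{Y,\bm{\mathfrak u},\gamma}$ together with monotonicity of $\otimes$ in both arguments and the identity $0\otimes\mathfrak v\isom\mathfrak v$; the three cases $\gamma+1\in X$, $\gamma+1\in Y\setminus X$, and $\gamma+1\notin Y$ are then all immediate. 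For limit~$\alpha$ (so $\mathfrak u^X_\alpha=\mathfrak u^Y_\alpha=\mathfrak u_\alpha$ is nonprincipal): if $\beta=\alpha$, the induction hypothesis gives $\mathfrak v_{X,\bm{\mathfrak u},\alpha_i}\le_\RK\mathfrak v_{Y,\bm{\mathfrak u},\alpha_i}$, i.e. $f_{X,\bm{\mathfrak u},\alpha}(i)\le_\RK f_{Y,\bm{\mathfrak u},\alpha}(i)$ for all $i$, and the uniformization fact (with $\mathfrak w=\mathfrak u_\alpha$ and $f'=f_{Y,\bm{\mathfrak u},\alpha}$, whose range is strongly discrete by construction) yields $\mathfrak v_{X,\bm{\mathfrak u},\alpha}\le_\RK\mathfrak v_{Y,\bm{\mathfrak u},\alpha}$; if $\beta<\alpha$, pick $i_0$ with $\beta\le\alpha_j$ for $j\ge i_0$, apply the hypothesis to get $\mathfrak v_{X,\bm{\mathfrak u},\beta}\le_\RK f_{Y,\bm{\mathfrak u},\alpha}(j)$ for all $j\ge i_0$, and apply the uniformization fact with the constant map $g'\equiv\mathfrak v_{X,\bm{\mathfrak u},\beta}$ (whose extension is $\mathfrak v_{X,\bm{\mathfrak u},\beta}$) and $\mathfrak w=\mathfrak u_\alpha$, noting $\{j:j\ge i_0\}\in\mathfrak u_\alpha$ because $\mathfrak u_\alpha$ is nonprincipal.

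The main obstacle, and the only place where more than bookkeeping is required, is the uniformization fact: pointwise $\le_\RK$-domination does not by itself produce a single reducing map, and the gluing of the $g_i$ on pairwise disjoint $B_i\in f'(i)$ is exactly what strong discreteness of the range buys. The remaining ingredients I would use freely are monotonicity of $\otimes$ under $\le_\RK$ in each argument and the behaviour $0\otimes\mathfrak v\isom\mathfrak v$ of a product with a principal factor; the hypothesis that every $\mathfrak u_\alpha$ is nonprincipal enters only through the limit indices, where it guarantees that $\mathfrak u_\alpha$ contains the cofinite set $\{j:j\ge i_0\}$ and keeps $\mathfrak u^X_\alpha=\mathfrak u^Y_\alpha=\mathfrak u_\alpha$ available for the uniformization step.
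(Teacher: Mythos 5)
Your proof is correct and follows essentially the same induction on $\alpha$ as the paper's. The only differences are that you isolate and prove explicitly the gluing/uniformization step (pointwise $\le_{\RK}$-domination on a set in $\mathfrak w$, together with discreteness of the range, yields $\le_{\RK}$ between the ultrafilter extensions), which the paper uses silently in both limit subcases, and that you treat the successor subcase $\beta=\alpha=\gamma+1$ via monotonicity of $\otimes$ and $0\otimes\mathfrak v\isom\mathfrak v$ --- a subcase the paper's write-up passes over by assuming $\beta\le\gamma$.
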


\begin{proof}
By induction on~$\alpha$. For the induction basis, 
let $\alpha=0$ and $X\subseteq Y\subseteq\omega_1$\,. 
Then also $\beta=0$. If $0\in X$, then 
$
\mathfrak v_{X,\bm{\mathfrak u},0}
\isom
\mathfrak u_0
\isom
\mathfrak v_{Y,\bm{\mathfrak u},0}\,,
$ 
so the required statement holds. If $0\notin X$, 
then $\mathfrak v_{X,\bm{\mathfrak u},0}\in\omega$, 
and the required statement holds as well. 
	
\par 
Let now $\alpha>0$, assume the induction hypothesis 
has already been proved for all $\beta<\alpha$, and 
let $X\subseteq Y\subseteq\omega_1$\,. 
If $\alpha=\gamma+1$, then by~$(\ast)$ we have:  
$$
\mathfrak v_{Y,\bm{\mathfrak u},\alpha}
\isom
\mathfrak u^{Y}_\alpha
\otimes
\mathfrak v_{Y,\bm{\mathfrak u},\gamma}\,.
$$
Since $\beta\leq\gamma$, by the induction hypothesis  
we have: 
$$
\mathfrak v_{X,\bm{\mathfrak u},\beta}
\leq_{\RK}
\mathfrak v_{Y,\bm{\mathfrak u},\gamma}
\leq_{\RK}
\mathfrak u^{Y}_\alpha
\otimes 
\mathfrak v_{Y,\bm{\mathfrak u},\gamma}
\isom
\mathfrak v_{Y,\bm{\mathfrak u},\alpha}\,.
$$
Let $\alpha>0$ be a~limit ordinal. 
If $\beta<\alpha$, then there exists $i_0\in\omega$ 
such that $\beta\leq\alpha_j$ for all~$j$, 
$i_0\leq j<\omega$. By the induction hypothesis, 
for all $j$, $i_0\leq j<\omega$, we have:  
$$
\mathfrak v_{X,\bm{\mathfrak u},\beta}
\leq_{\RK} 
\mathfrak v_{Y,\bm{\mathfrak u},\alpha_j}=
f_{Y,\bm{\mathfrak u},\alpha}(j)\,.
$$ 
Whence, since the ultrafilter 
$\mathfrak u_\alpha$ is nonprincipal, we get: 
$$
\mathfrak v_{X,\bm{\mathfrak u},\beta}
\leq_{\RK}
\widetilde{f_{Y,\bm{\mathfrak u},\alpha}}
(\mathfrak u_\alpha)=
\mathfrak v_{Y,\bm{\mathfrak u},\alpha}\,.
$$
And if $\alpha=\beta$, then for each $i<\omega$ 
by the induction hypothesis we have: 
$$
f_{X,\bm{\mathfrak u},\alpha}(i)=
\mathfrak v_{X,\bm{\mathfrak u},\alpha_i}
\leq_{\RK}
\mathfrak v_{Y,\bm{\mathfrak u},\alpha_i}=
f_{Y,\bm{\mathfrak u},\alpha}(i)\,.
$$
It follows 
$$
\mathfrak v_{X,\bm{\mathfrak u},\alpha}=
\widetilde{f_{X,\bm{\mathfrak u},\alpha}}
(\mathfrak u_\alpha)
\leq_{\RK} 
\widetilde{f_{Y,\bm{\mathfrak u},\alpha}}
(\mathfrak u_\alpha)=
\mathfrak v_{Y,\bm{\mathfrak u},\alpha}\,.
$$
The proof is complete. 
\end{proof}

\par 
The following lemma can be considered 
as a~generalization of 
Theorem~\ref{t: weak p-point below product}.

\begin{lm}\label{lm: p-point leq v} 
Let 
$
\bm{\mathfrak u}:=
(\mathfrak u_\alpha)_{\alpha<\omega_1}
$ 
be an $\omega_1$-sequence of ultrafilters in 
$\scc\omega\setminus\omega$. Then for every 
$\alpha<\omega_1$\,, $X\subseteq\omega_1$\,, 
and weak $P$-point~$\mathfrak w$, 
$$
\mathfrak w
\leq_{\Comf}
\mathfrak v_{X,\bm{\mathfrak u},\alpha}
\:\text{ implies }\;
\mathfrak w
\leq_{\RK}
\mathfrak u_\beta
$$
for some 
$\beta\in(X\cup\Lim(\omega_1))\cap(\alpha+1)$.  
\end{lm}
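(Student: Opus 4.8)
The plan is to induct on $\alpha$, proving the statement for all $X\subseteq\omega_1$ and all weak $P$-points $\mathfrak w$ at once. As a preliminary reduction, since $\mathfrak w$ is a weak $P$-point, $\mathfrak w\le_{\Comf}\mathfrak v_{X,\bm{\mathfrak u},\alpha}$ already gives $\mathfrak w\le_{\RK}\mathfrak v_{X,\bm{\mathfrak u},\alpha}$ by \cite{Garcia-Ferreira 1993}, Theorem~2.10; and since $\le_{\RK}\subseteq\le_{\Comf}$, nothing is lost in passing back and forth when the induction hypothesis is re-applied. Note also that $\mathfrak w\le_{\RK}\mathfrak v_{X,\bm{\mathfrak u},\alpha}$ forces $\mathfrak v_{X,\bm{\mathfrak u},\alpha}$ to be nonprincipal (as $\mathfrak w$ is). The base case $\alpha=0$ is immediate: $\mathfrak v_{X,\bm{\mathfrak u},0}=\mathfrak u^X_0$, which is $\mathfrak u_0$ if $0\in X$ (take $\beta=0$) and $0$ otherwise, in which case the hypothesis is vacuous.

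For $\alpha=\gamma+1$, apply $(\ast)$ to the sequence $\bm{\mathfrak u}^X$ to get $\mathfrak v_{X,\bm{\mathfrak u},\gamma+1}\isom\mathfrak u^X_{\gamma+1}\otimes\mathfrak v_{X,\bm{\mathfrak u},\gamma}$. If $\gamma+1\notin X$, then $\gamma+1\notin\Lim(\omega_1)$ too, so $\mathfrak u^X_{\gamma+1}=0$ and hence $\mathfrak v_{X,\bm{\mathfrak u},\gamma+1}\isom\mathfrak v_{X,\bm{\mathfrak u},\gamma}$; the induction hypothesis for $\gamma$ then finishes. If $\gamma+1\in X$, then $\mathfrak u^X_{\gamma+1}=\mathfrak u_{\gamma+1}$, and applying Theorem~\ref{t: weak p-point below product} to $\mathfrak w\le_{\Comf}\mathfrak u_{\gamma+1}\otimes\mathfrak v_{X,\bm{\mathfrak u},\gamma}$ yields either $\mathfrak w\le_{\RK}\mathfrak u_{\gamma+1}$ (take $\beta=\gamma+1$) or $\mathfrak w\le_{\RK}\mathfrak v_{X,\bm{\mathfrak u},\gamma}$, and in the second case the induction hypothesis for $\gamma$ produces the required $\beta<\gamma+1\le\alpha+1$.

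The limit case is the substantive one. Let $\alpha$ be a limit ordinal, so $\mathfrak u^X_\alpha=\mathfrak u_\alpha$ is nonprincipal and $\mathfrak v_{X,\bm{\mathfrak u},\alpha}=\widetilde{f_{X,\bm{\mathfrak u},\alpha}}(\mathfrak u_\alpha)$ with $f_{X,\bm{\mathfrak u},\alpha}(i)\isom\mathfrak v_{X,\bm{\mathfrak u},\alpha_i}$. Fix $k:\omega\to\omega$ with $\widetilde k(\mathfrak v_{X,\bm{\mathfrak u},\alpha})=\mathfrak w$ and put $g:\omega\to\scc\omega$, $g(i):=\widetilde k(f_{X,\bm{\mathfrak u},\alpha}(i))$. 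Because the continuous extension of a composite equals the composite of the continuous extensions (both agree on the dense set $\omega$ and $\scc\omega$ is Hausdorff), $\widetilde g(\mathfrak u_\alpha)=\widetilde k(\widetilde{f_{X,\bm{\mathfrak u},\alpha}}(\mathfrak u_\alpha))=\mathfrak w$, and moreover $g(i)\le_{\RK}f_{X,\bm{\mathfrak u},\alpha}(i)\isom\mathfrak v_{X,\bm{\mathfrak u},\alpha_i}$ for every $i$. Now invoke Theorem~\ref{t: weak p-point} in the form (iib): one of $\{i:g(i)=\mathfrak w\}$, $\{i:g(i)\in\omega\}$ lies in $\mathfrak u_\alpha$. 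In the first case, choosing such an $i$ gives $\mathfrak w=g(i)\le_{\RK}\mathfrak v_{X,\bm{\mathfrak u},\alpha_i}$ with $\alpha_i<\alpha$, and the induction hypothesis for $\alpha_i$ supplies $\beta\in(X\cup\Lim(\omega_1))\cap(\alpha_i+1)\subseteq(X\cup\Lim(\omega_1))\cap(\alpha+1)$ with $\mathfrak w\le_{\RK}\mathfrak u_\beta$. In the second case, redefining $g$ to $0$ off the set $\{i:g(i)\in\omega\}\in\mathfrak u_\alpha$ yields a genuine map $g':\omega\to\omega$ with $\widetilde{g'}(\mathfrak u_\alpha)=\mathfrak w$, so $\mathfrak w\le_{\RK}\mathfrak u_\alpha$, and since $\alpha\in\Lim(\omega_1)$ we may take $\beta=\alpha$.

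The main obstacle is exactly this limit step: one must handle the general ultrafilter limit $\widetilde{f_{X,\bm{\mathfrak u},\alpha}}(\mathfrak u_\alpha)$ — which, unlike a finite Fubini product, is not directly covered by Theorem~\ref{t: weak p-point below product} — by reducing it, via the composition identity above, to the \emph{sharp} dichotomy (iib) of Theorem~\ref{t: weak p-point} rather than its weaker form (iia), while carefully keeping the witnessing ordinal $\beta$ inside $(X\cup\Lim(\omega_1))\cap(\alpha+1)$. Everything else — the successor step and the reduction from $\le_{\Comf}$ to $\le_{\RK}$ — is routine once Theorems~\ref{t: weak p-point} and~\ref{t: weak p-point below product} are available.
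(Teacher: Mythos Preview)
Your argument is correct and follows the same strategy as the paper: induct on~$\alpha$, reduce $\le_{\Comf}$ to $\le_{\RK}$ via \cite{Garcia-Ferreira 1993}, Theorem~2.10, then apply the weak $P$-point dichotomy of Theorem~\ref{t: weak p-point}(iib) to the composite $\widetilde{k}\circ f_{X,\bm{\mathfrak u},\alpha}$ to either drop to a smaller index (and invoke the induction hypothesis) or obtain $\mathfrak w\le_{\RK}\mathfrak u^X_\alpha$. The only organizational difference is that you split the successor and limit cases, handling $\alpha=\gamma+1$ via $(\ast)$ and Theorem~\ref{t: weak p-point below product}, whereas the paper treats all $\alpha>0$ uniformly through $\mathfrak v_{X,\bm{\mathfrak u},\alpha}=\widetilde{f_{X,\bm{\mathfrak u},\alpha}}(\mathfrak u^X_\alpha)$ and a single appeal to~(iib); in their argument the successor-not-in-$X$ situation is absorbed by the observation that $\mathfrak w\le_{\RK}\mathfrak u^X_\alpha$ forces $\mathfrak u^X_\alpha$ nonprincipal, hence $\alpha\in X\cup\Lim(\omega_1)$.
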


\begin{proof} 
As in the proof of 
Theorem~\ref{t: weak p-point below product}, 
let us start by recalling that, whenever $\mathfrak w$ 
is a~weak $P$-point and $\mathfrak v$ an arbitrary 
ultrafilter, then $\mathfrak w\leq_{\Comf}\mathfrak v$ 
implies $\mathfrak w\leq_{\RK}\mathfrak v$ 
(see~\cite{Garcia-Ferreira 1993}, Theorem~2.10). 
Now prove the lemma by induction on~$\alpha$. 

\par 
The induction basis ($\alpha=0$) is evident. 
Let $\alpha>0$, assume the induction hypothesis 
holds for all $\beta<\alpha$, and 
$
\mathfrak w
\leq_{\RK}
\mathfrak v_{X,\bm{\mathfrak u},\alpha}=
\widetilde{f_{X,\bm{\mathfrak u},\alpha}}
(\mathfrak u^{X}_\alpha)\,,
$ 
i.e., for some $g:\omega\to\omega$ we have 
$$
\mathfrak w=
\widetilde{g}(\widetilde{f_{X,\bm{\mathfrak u},\alpha}}
(\mathfrak u^{X}_\alpha))=
\widetilde{g\circ f_{X,\bm{\mathfrak u},\alpha}}
(\mathfrak u^{X}_\alpha).
$$
Since $\mathfrak w$ is a~weak $P$-point, 
it follows from Theorem~\ref{t: weak p-point}(iib) 
that one of two sets 
$
A:=
\{i\in\omega: 
g\circ f_{X,\bm{\mathfrak u},\alpha}(i)
=\mathfrak w\} 
$
and
$ 
B:=
\{i\in\omega: 
g\circ f_{X,\bm{\mathfrak u},\alpha}(i)
\in\omega\}
$ 
belongs to~$\mathfrak u^{X}_\alpha$.

\par 
If $A\in\mathfrak u^{X}_\alpha$, 
pick any $i\in A$. We have: 
$$
\mathfrak w=
\widetilde{g\circ f_{X,\bm{\mathfrak u},\alpha}}(i)=
\widetilde{g}(\widetilde{f_{X,\bm{\mathfrak u},\alpha}}(i)),
$$
where the ultrafilter 
$\widetilde{f_{X,\bm{\mathfrak u},\alpha}}(i)$ 
equals $\mathfrak v_{X,\bm{\mathfrak u},\beta}$ 
for some $\beta<\alpha$. Thus, for some $\beta<\alpha$, 
we have 
$$
\mathfrak w
\leq_{\RK}
\mathfrak v_{X,\bm{\mathfrak u},\beta}\,,
$$
and it remains to use the induction hypothesis. 

\par 
If $B\in\mathfrak u^{X}_\alpha$, 
then for all $i\in B$ we have
$$
\widetilde{g\circ f_{X,\bm{\mathfrak u},\alpha}}(i)=
\widetilde{g}
(\widetilde{f_{X,\bm{\mathfrak u},\alpha}}(i))
\in\omega,
$$ 
so we can define a~map 
$h:\omega\to\omega$ by letting 
$$
h(i):=
\begin{cases}
\widetilde{g}(\widetilde{f_{X,\bm{\mathfrak u},\alpha}}(i))
&\text{if }i\in B,
\\
0
&\text{otherwise}.
\end{cases}
$$
The maps $\widetilde h$ 
and $\widetilde g\circ\widetilde f$ are continuous 
and coincide on the set $B\in\mathfrak u^{X}_\alpha$. 
It follows 
$$
\widetilde{h}(\mathfrak u^{X}_\alpha)=
\widetilde{g}
(\widetilde{f_{X,\bm{\mathfrak u},\alpha}}
(\mathfrak u^{X}_\alpha))=
\mathfrak w,
$$
hence, 
$\mathfrak w\leq_{\RK}\mathfrak u^{X}_\alpha\,.$ 
In particular, it follows that the ultrafilter 
$\mathfrak u^{X}_\alpha$ is nonprincipal. Therefore, 
$\alpha\in\Lim(\omega_1)$ or 
$\mathfrak u^{X}_\alpha=\mathfrak u_\alpha$\,, 
which thus completes the proof.  
\end{proof} 

\begin{lm}\label{lm: leq_RK to subseteq}
Let
$
\bm{\mathfrak u}:=
(\mathfrak u_\alpha)_{\alpha<\omega_1}
$ 
be an $\omega_1$-sequence of weak $P$-points 
in~$\scc\omega$. Then for all $\alpha,\beta<\omega_1$ 
and $X,Y\subseteq\omega_1$\,, 
if
$$
\mathfrak v_{X,\bm{\mathfrak u},\alpha}
\leq_{\Comf}
\mathfrak v_{Y,\bm{\mathfrak u},\beta}\,,
$$ 
then for each $\gamma\in X\cap(\alpha+1)$ 
there exists 
$
\delta\in
(Y\cup\Lim(\omega_1))\cap(\beta+1) 
$ 
such that 
$$
\mathfrak u_\gamma
\leq_{\RK}
\mathfrak u_\delta\,.
$$
\end{lm}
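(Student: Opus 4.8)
The plan is to chain together the three preceding lemmas, with essentially no new ideas. Fix $\alpha,\beta<\omega_1$ and $X,Y\subseteq\omega_1$ with $\mathfrak v_{X,\bm{\mathfrak u},\alpha}\le_\Comf\mathfrak v_{Y,\bm{\mathfrak u},\beta}$, and fix $\gamma\in X\cap(\alpha+1)$; the task is to produce a suitable $\delta$. First I would observe that, since every $\mathfrak u_\nu$ is a~weak $P$-point and hence nonprincipal, the modified sequence $\bm{\mathfrak u}^X$ has the property that each entry $\mathfrak u^X_\nu$ with $\nu\in\Lim(\omega_1)$ is just $\mathfrak u_\nu$ (because $\Lim(\omega_1)\subseteq X\cup\Lim(\omega_1)$), so it is nonprincipal. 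Therefore Lemma~\ref{lm: u leq v} applies to $\bm{\mathfrak u}^X$, and as $\gamma\le\alpha$ it gives $\mathfrak u^X_\gamma\le_\RK\mathfrak v_{\bm{\mathfrak u}^X,\alpha}=\mathfrak v_{X,\bm{\mathfrak u},\alpha}$; since $\gamma\in X$ we have $\mathfrak u^X_\gamma=\mathfrak u_\gamma$, so $\mathfrak u_\gamma\le_\RK\mathfrak v_{X,\bm{\mathfrak u},\alpha}$.

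Next, using ${\le_\RK}\subseteq{\le_\Comf}$ and transitivity of $\le_\Comf$, from the chain $\mathfrak u_\gamma\le_\RK\mathfrak v_{X,\bm{\mathfrak u},\alpha}\le_\Comf\mathfrak v_{Y,\bm{\mathfrak u},\beta}$ I obtain $\mathfrak u_\gamma\le_\Comf\mathfrak v_{Y,\bm{\mathfrak u},\beta}$. Then I would apply Lemma~\ref{lm: p-point leq v} with the weak $P$-point $\mathfrak w:=\mathfrak u_\gamma$, taking $Y$ in the role of the set and $\beta$ in the role of the ordinal (its hypothesis that the sequence consists of nonprincipal ultrafilters again holds since the $\mathfrak u_\nu$ are weak $P$-points): this produces $\delta\in(Y\cup\Lim(\omega_1))\cap(\beta+1)$ with $\mathfrak u_\gamma\le_\RK\mathfrak u_\delta$, which is exactly what is wanted.

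All the substantive work already resides in Lemmas~\ref{lm: u leq v} and~\ref{lm: p-point leq v} (the latter built on Theorem~\ref{t: weak p-point} and the already-recalled coincidence of $\le_\Comf$ with $\le_\RK$ on weak $P$-points), so the proof here is a~short bookkeeping argument. The only point requiring any care, and the nearest thing to an obstacle, is checking that the hypotheses of those two lemmas survive passage to the modified sequence $\bm{\mathfrak u}^X$: concretely, that the replacement of an entry by the principal ultrafilter $0$ never occurs at a~nonzero limit index, and does not occur at the fixed index $\gamma$ because $\gamma\in X$ — both immediate from the definition of the $X$-modification.
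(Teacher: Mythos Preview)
Your proposal is correct and follows essentially the same approach as the paper's own proof: verify that $\bm{\mathfrak u}^X$ satisfies the hypothesis of Lemma~\ref{lm: u leq v}, apply it to get $\mathfrak u_\gamma\le_\RK\mathfrak v_{X,\bm{\mathfrak u},\alpha}$, chain with the assumed $\le_\Comf$-inequality, and then invoke Lemma~\ref{lm: p-point leq v} to produce~$\delta$. Your checks on the $X$-modification (no principal replacements at limit indices, and $\mathfrak u^X_\gamma=\mathfrak u_\gamma$ since $\gamma\in X$) are exactly the bookkeeping the paper does as well.
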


\begin{proof}
Note that $\mathfrak u^{X}_\alpha=\mathfrak u_\alpha$ 
for every limit ordinal~$\alpha$. Therefore, 
in~$\bm{\mathfrak u}^X$, all ultrafilters 
$\mathfrak u^{X}_\alpha$ with limit indices~$\alpha$ 
are nonprincipal, and thus we are under the assumption 
of Lemma~\ref{lm: u leq v}. It follows that, 
for each $\gamma\in X\cap(\alpha+1)$, we have:  
$$
\mathfrak u_\gamma=
\mathfrak u^{X}_\gamma
\leq_{\RK}
\mathfrak v_{X,\bm{\mathfrak u},\alpha}
\leq_{\Comf}
\mathfrak v_{Y,\bm{\mathfrak u},\alpha}
$$
(where $\mathfrak u_\gamma=\mathfrak u^{X}_\gamma$ 
holds since $\gamma\in X$ and 
$
\mathfrak u_\gamma
\leq_{\RK}
\mathfrak v_{X,\bm{\mathfrak u},\alpha}
$ 
by Lemma~\ref{lm: u leq v}), and so,
$$
\mathfrak u_\gamma
\leq_{\Comf}
\mathfrak v_{Y,\bm{\mathfrak u},\alpha}\,.
$$
But then, since $\mathfrak u_\gamma$ is a~weak 
$P$-point, by using Lemma~\ref{lm: p-point leq v} 
we get $\mathfrak u_\gamma\leq_\RK\mathfrak u_\delta$ 
for some $\delta\in(Y\cup\Lim(\omega_1))\cap(\beta+1),$
as required. 
\end{proof}

\par 
Combining the preceding lemmas, we obtain 
the main result of this section: 

\begin{thm}[The Second Main Theorem]%
\label{t: embedding of countable into RK etal}
There exists a~map 
$e:\mathscr P_{\omega_1}(\omega_1)\to\scc\omega$ 
such that for all $X,Y\in\mathscr P_{\omega_1}(\omega_1)$, 
\begin{enumerate}
\item[(i)] 
if $X\subseteq Y$ then $e(X)\le_\RK e(Y)\,;$
\item[(ii)] 
if $e(X)\leq_{\Comf}e(Y)$ then $X\subseteq Y$. 
\end{enumerate}
Consequently, $e$~is an isomorphic embedding of the 
lattice $(\mathscr P_{\omega_1}(\omega_1),\subseteq)$
into the structure $(\scc\omega,R)$ whenever 
${\le_\RK}\subseteq{R}\subseteq{\le_\Comf}$\,. 
\end{thm}

\begin{proof}
Let 
$
\bm{\mathfrak u}:=
(\mathfrak u_\alpha)_{\alpha<\omega_1}
$
be an $\omega_1$-sequence of pairwise 
$\le_{\RK}$-incomparable weak $P$-points 
in~$\scc\omega$, and let 
$A:=\omega_1\setminus\Lim(\omega_1)$. 
Since $|A|=\aleph_1$\,, it suffices to define 
a~required map~$e$ whose domain is rather 
$\mathscr P_{\omega_1}(A)$ than 
$\mathscr P_{\omega_1}(\omega_1)$. 

\par 
If $X\subseteq\omega_1$ is at most countable, 
we let $\sup^+X:=\min(\omega_1\setminus X)$. Define 
$e$ by letting, for all $X\in\mathscr P_{\omega_1}(A)$, 
$$
e(X):=
\mathfrak v_{X,\bm{\mathfrak u},\sup^+\!X}\,.
$$ 
Let us verify that, for all 
$X,Y\in\mathscr P_{\omega_1}(A)$, 
both (i) and~(ii) hold. 

\par 
(i). 
If $X\subseteq Y$, then $\sup^+X\subseteq\sup^+Y$,  
whence it follows $e(X)\le_{\RK}e(Y)$ 
by Lemma~\ref{lm: subseteq to leq_RK}. 

\par 
(ii). 
Assume $e(X)\le_{\Comf}e(Y)$.  
Since $Z\subseteq\sup^+Z$ for every 
at most countable $Z\subseteq\omega_1$\,, 
by Lemma~\ref{lm: leq_RK to subseteq},  
for each $\gamma\in X$ there exists 
$\delta\in Y\cup\Lim(\omega_1)$ such that  
$\mathfrak u_\gamma\leq_{\RK}\mathfrak u_\delta$\,. 
But the sequence $\bm{\mathfrak u}$ consists of 
pairwise $\le_{\RK}$-incomparable ultrafilters, 
therefore, $\delta=\gamma$, and so 
$\gamma\in Y\cup\Lim(\omega_1)$\,. Since $\gamma$ 
is a~successor ordinal, we conclude that 
$\gamma\in Y$. Thus, finally, $X\subseteq Y$. 

\par 
The theorem is proved. 
\end{proof}

\begin{cor}\label{c: embedding of countable into RK etal}
Let ${\le_\RK}\subseteq{R}\subseteq{\le_\Comf}$\,.
Then $(\scc\omega,R)$ is universal for partially 
ordered sets that are locally at most countable 
and have the cardinality at most~$\aleph_1$\,. 
\end{cor}

\begin{proof}
By Lemma~\ref{l: universality} and 
Theorem~\ref{t: embedding of countable into RK etal}.
\end{proof}

\par 
In particular, 
Theorem~\ref{t: embedding of countable into RK etal} and 
Corollary~\ref{c: embedding of countable into RK etal}
hold for $(\scc\omega,R_{1+\alpha})$ 
for any $\alpha<\omega_1$\,. 

\begin{remark}\label{r: countable embedding into RF}
As in Remark~\ref{r: embedding into RF}, 
a~(simpler) variant of the argument shows that 
there exists an isomorphic embedding~$e$ of 
the tree $({\omega_1}^{<{\omega_1}},\subseteq)$ 
into $(\scc\omega,\le_{\RF})$. 
First note that the construction of ultrafilters 
$\mathfrak v_{\bm{\mathfrak u},\alpha}$ guarantees 
that Lemma~\ref{lm: u leq v} holds for $\le_{\RF}$. 
Then, if $s\in{\omega_1}^{\le{\omega_1}}$ and 
$
\bm{\mathfrak u}:=
(\mathfrak u_\alpha)_{\alpha<\omega_1}
$ 
is an $\omega_1$-sequence in~$\scc\omega$, define 
an {\it $s$-modification} $\bm{\mathfrak u}^{s}$ 
of~$\bm{\mathfrak u}$ by letting 
$\mathfrak u^{s}_\alpha:=\mathfrak u_{s(\alpha)}$ 
if $\alpha\in\dom s$, and $\mathfrak u^{s}_\alpha:=0$ 
otherwise. We leave it to the reader to verify 
that the corresponding variant of 
Lemma~\ref{lm: subseteq to leq_RK} remains true 
and that, with $e$~appropriately defined, 
$s\subseteq t$ implies $e(s)\le_{\RF}e(t)$ 
for all $s,t\in{\omega_1}^{<{\omega_1}}$. 
But then the converse implication holds 
because the $\mathfrak u_\alpha$ are assumed 
to be $\le_{\RF}$-incomparable and the preorder 
$\le_{\RF}$ is tree-like, so $e$~is a~required 
embedding. Consequently, $(\scc\omega,\le_{\RF})$ 
is universal for locally countable trees 
of cardinality~$\le\aleph_1$\,. 
\end{remark}

\section{Concluding remarks}\label{sec: conclusion} 

\par 
An obvious necessary condition for an embeddability 
of a~partially ordered set $(X,\le)$ in 
$(\scc\omega,\le_{\RK})$, or more generally, 
in $(\scc\omega,R)$ with 
${\le_\RK}\subseteq{R}\subseteq{\le_\Comf}$\,, is 
that $(X,\le)$ be of cardinality~$\le2^{\mathfrak c}$ 
and local cardinality~$\le\mathfrak c$\,, 
i.e., that it is embeddable in the lattice 
$(\mathscr P_{\mathfrak c^+}(2^{\mathfrak c}),\subseteq)$.
The second author is inclined to believe that 
this condition is also sufficient, i.e., 
$(\mathscr P_{\mathfrak c^+}(2^{\mathfrak c}),\subseteq)$ 
is embeddable in $(\scc\omega,R)$ too, and that 
this criterion is provable in $\ZFC$ alone. The results  
of this paper are not nearly as strong; we were able 
to prove in $\ZFC$ only that each of two lattices 
$(\mathscr P_{\omega}(2^{\mathfrak c}),\subseteq)$ 
and $(\mathscr P_{\omega_1}(\omega_1),\subseteq)$ 
is embeddable in $(\scc\omega,R)$. 
Minor improvements in these results can be made without 
much effort. E.g., using $\mathfrak c^+$-directedness 
of~$\le_{\RK}$\,, it is easy to construct by recursion 
on $\alpha<\mathfrak c^+$ an embedding of the 
lexicographic product of the ordinal~$\mathfrak c^+$ and 
the lattice $(\mathscr P_{\omega_1}(\omega_1),\subseteq)$ 
in $(\scc\omega,\le_{\RK})$ (the same fact  
could be stated for any $(\scc\omega,R)$ with 
${\le_\RK}\subseteq{R}\subseteq{\le_\Comf}$ if 
we knew that there exist $2^{\mathfrak c}$ weak 
$P$-points $\le_\RK$-above every ultrafilter). 
Perhaps, some modification of the method presented 
here might allow us to embed the lattice
$(\mathscr P_{\omega_1}(2^{\mathfrak c}),\subseteq)$ 
in $(\scc\omega,\le_{\RK})$. However, it seems that 
new ideas will be needed to move forward even further.


\begin{footnotesize} 
\noindent
{\sc
HSE University
\/}
\\
{\it E-mail address:\/} 
niknikols0@gmail.com
\end{footnotesize} 

\begin{footnotesize} 
\noindent
{\sc
Higher School of Modern Mathematics MIPT
\/}
\\
{\it E-mail address:\/} 
d.i.saveliev@gmail.com 
\end{footnotesize} 

\end{document}